\newtheorem{theorem}{Theorem}[section]
\newtheorem{lemma}{Lemma}[section]
\newtheorem{corollary}{Corollary}[theorem]
\newcommand{\dsp}{\displaystyle}
\newcommand{\PP}{\mathcal{P}}
\newcommand{\R}{\mathbb{R}}
\numberwithin{equation}{section}
\title{Complementary Romanovski-Routh polynomials and their zeros}
\author{ 
\href{https://orcid.org/0000-0002-1318-3892}{\includegraphics[scale=0.06]{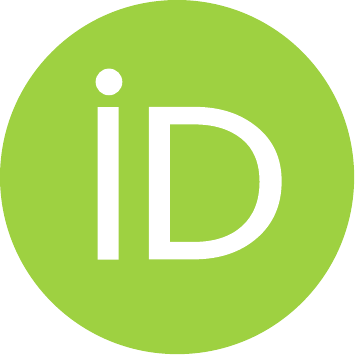}\hspace{1mm}Luana~L.~Silva~Ribeiro}\\
	Departamento de Matemática \\
	IBILCE - Universidade Estadual Paulista  \\
	S\~ao Jos\'e do Rio Preto, SP, CEP 15054-000\\
	\texttt{luana.math@hotmail.com} \\
%
\And
\href{https://orcid.org/0000-0002-5124-8423}{\includegraphics[scale=0.06]{orcid.pdf}\hspace{1mm}Alagacone~Sri~Ranga} \\
	Departamento de Matemática \\
	IBILCE - Universidade Estadual Paulista  \\
	S\~ao Jos\'e do Rio Preto, SP, CEP 15054-000\\
	\texttt{sri.ranga@unesp.br} \\
\And
\href{https://orcid.org/0000-0002-5918-7549}{\includegraphics[scale=0.06]{orcid.pdf}\hspace{1mm}Yen~Chi~Lun} \\
	Departamento de Matemática \\
	IBILCE - Universidade Estadual Paulista  \\
	S\~ao Jos\'e do Rio Preto, SP, CEP 15054-000\\
	\texttt{yen.chilun@yahoo.com.tw} \\		
}
\begin{document}
\maketitle

\begin{abstract}
The efficacy of numerical methods like integral estimates via Gaussian quadrature formulas depends on the localization of the zeros of the associated family of orthogonal polynomials. Following the renewed interest in quadrature formulas on the unit circle and $R_{II}$-type polynomials, in this work we present properties satisfied by the zeros of the complementary Romanovski-Routh polynomials. Our results include extreme bounds, convexity, density, and the connection of such zeros with the zeros of classical orthogonal polynomials via asymptotic formulas.
\end{abstract}

\keywords{Asymptotics \and Complementary Romanovski-Routh polynomials \and Convexity \and Density \and Extreme bounds}

\onehalfspacing
\section{Introduction}\label{intro}
The $n$-th degree monic {\em complementary Romanovski-Routh polynomial} (CRR polynomial for short) that we consider here was presented in Mart\'inez-Finkelshtein et al. \cite{MRRT-CRR-2019} in terms of Gauss hypergeometric function as
$$
\widehat{\PP}_{n}(b;x) = (x-i)^{n} {}_{2}F_{1}\left(-n,b;b+\overline{b};\frac{-2i}{x-i}\right), \quad n \geq 0,
$$
where $b=\lambda+i\eta$, with $\lambda>0$ and $\eta\in \R$.
The CRR polynomials appear in wave functions in the context of quantum chromodynamics (see Raposo et al. \cite{RapWebCastKirc-CEJP2007}), and they also play an important role in the solution of the (one-dimensional) Schr\"{o}dinger equation for the hyperbolic Scarf potential (see Gautschi \cite{Gautschi-SIAMRev1967}). We also cite Shukla and Swaminathan \cite{Swaminathan2023} for a recent generalization of these polynomials. It is noteworthy that the sequence of CRR polynomials is also an example of an Appel sequence. This property was explored recently in Mart\'inez-Finkelshtein et al. \cite{MRRT-CRR-2019} to obtain a generating function for these polynomials. The latter belongs to a subfamily of the Whittaker functions that includes the Coulomb wave functions and the Bessel functions as particular cases.

From the point of view of  numerical analysis,  in the recent work Bracciali et al.  \cite{BraccPereiRanga-NA2019} it was shown that quadrature formulas based on these polynomials also provide useful  quadrature formulas on the unit circle.  Thus the study of the behavior and location of the zeros of the CRR polynomial is important.

We note that bounds for these zeros can be obtained from at least two distinct methods from the works of Jordaan and To\'okos \cite{Kerstin2014} and Mart\'inez-Finkelshtein et al. \cite{Veronese}. In fact, in Jordaan and To\'okos \cite{Kerstin2014} the authors have investigated properties of the Pseudo-Jacobi polynomials, which can be adapted to the present case by exploring the connection between the CRR polynomials and the complexified Jacobi polynomials (Raposo et al.  \cite{RapWebCastKirc-CEJP2007}). Alternatively, from the fact that CRR polynomials are related to special para-orthogonal polynomials (see Mart\'inez-Finkelshtein et al. \cite{MRRT-CRR-2019} for further details), the results found in Mart\'inez-Finkelshtein et al. \cite{Veronese} can also be used to construct bounds for the zeros of CRR polynomials. However, the bounds obtained by these methods are not always optimal. In this work we shall present new bounds which are more accurate for large variations in the parameters $\lambda$ and $\eta$. Furthermore, we also investigate properties such as  convexity, density, and asymptotics. From the latter we also show how the CRR polynomial is linked to the classical Hermite and Laguerre polynomials.

In what follows, we adopt the more convenient normalization for the CRR polynomial $\PP_{n}(b;x)$ (see Mart\'inez-Finkelshtein et al. \cite{MRRT-CRR-2019}) given by
$$
\PP_{n}(b;x)  =  \frac{(2\lambda)_{n}}{2^{n} (\lambda)_{n}} \widehat{\PP}_{n}(b;x), \quad n \geq 0,
$$
where $(\alpha)_{n}=\alpha(\alpha+1)\cdots(\alpha+n-1)$ denotes the Pochhammer symbol, and $b=\lambda+i\eta$, $\lambda>0$. In this work we study the zeros of $\PP_{n}(b;x)$, henceforth denoted by $x_{n,k}^{(\eta,\lambda)}$, for $k =1,2, \ldots, n$.

The paper is organized as follows. We leave to Section \ref{prelim} the presentation of all the necessary notation, tools, and theorems which are going to be used throughout the work. We also recall in this section how the CRR polynomials are constructed via the $R_{II}$-type recurrence relation, and that their zeros coincide with the solutions of certain generalized eigenvalue problems. Sections \ref{main} and \ref{main2} summarize the novel results of our work. Section \ref{main} presents the extreme bounds, convexity, and density of the zeros of CRR polynomials. Moreover, these new bounds are compared with the two others obtained by the methods mentioned before. We simulate and verify that for the majority of $\eta$ and $\lambda$ the bounds presented here are more accurate. We also state some orthogonality property of the $k$-th associated CRR polynomials. Finally,
Section \ref{main2} is devoted to the asymptotic behavior of the CRR polynomials and their zeros.

\section{Preliminaries}\label{prelim}
The sequence $\{\PP_{n}(b;x)\}_{n \geq 0}$ of CRR polynomials is solution of the $R_{II}$-type recurrence relation (see Ismail and Ranga \cite{Ismail-Ranga})
\begin{equation}\label{3termR2TTT}
\PP_{n+1}(b;x)=(x-c_{n+1}^{(b)})\PP_{n}(b;x)-d_{n+1}^{(\lambda)}(x^{2}+1)\PP_{n-1}(b;x), \quad  n\geq 1,
\end{equation}
where  $\PP_{0}(b;x)=1$,  $\PP_{1}(b;x)=x-c_{1}^{(b)}$,
\begin{equation}\label{RomonovskyCoeff}
c_{n}^{(b)} =\frac{\eta}{\lambda+n-1} \ \ \mbox{and} \ \ d_{n+1}^{(\lambda)}=\frac{1}{4}\frac{n(n+2\lambda-1)}{(n+\lambda-1)(n+\lambda)},  \ \  n\geq 1.
\end{equation}
The sequence $\{d_{n}^{(\lambda)}\}_{n \geq 2}$ is a positive chain sequence, whose detailed properties can be found for instance in Wall \cite{Wall-book1948}, and in Chihara \cite{Chihara-book1978}. Thus it follows (see Theorem 2.2 of Ismail and Ranga \cite{Ismail-Ranga}) that the zeros of $\PP_{n}(b;x)$ are real, simple, and can be denoted as %
\[
x_{n,1}^{(\eta,\lambda)}< x_{n,2}^{(\eta,\lambda)}<\ldots< x_{n,n-1}^{(\eta,\lambda)} < x_{n,n}^{(\eta,\lambda)}, \ \mbox{ for} \ n \geq 2.
\]
It is also known that the zeros of $\PP_{n-1}(b;x)$ and $\PP_{n}(b;x)$ interlace.

Furthermore, $\PP_{n}(b;x)$ is the characteristic polynomial of the generalized eigenvalue problem %
\begin{equation}\label{GenEigeProb}
  \textbf{A}_{n}^{(b)}\textbf{u}_{n}^{(b)}(x)=x\textbf{B}_{n}^{(b)}\textbf{u}_{n}^{(b)}(x),
\end{equation}
where the Hermitian tridiagonal matrices $\textbf{A}_{n}^{(b)}$ and $\textbf{B}_{n}^{(b)}$ are
$$
\begin{bmatrix}
c_{1}^{(b)} & i\sqrt{d_{2}^{(\lambda)}} & \ldots & 0 & 0 \\
   -i\sqrt{d_{2}^{(\lambda)}} & c_{2}^{(b)} & \ldots & 0 & 0 \\
   \vdots  & \vdots & \ddots & \vdots & \vdots\\
   0 & 0  & \ldots & c_{n-1} & i\sqrt{d_{n}^{(\lambda)}}\\
   0 & 0  & \ldots & -i\sqrt{d_{n}^{(\lambda)}} & c_{n}^{(b)}
\end{bmatrix}\ \ \ \textup{and} \ \ \
\begin{bmatrix}
1 & \sqrt{d_{2}^{(\lambda)}}  & \ldots & 0 & 0 \\
\sqrt{d_{2}^{(\lambda)}} & 1  & \ldots & 0 & 0 \\
\vdots & \vdots & \ddots & \vdots & \vdots \\
0 & 0  & \ldots & 1 & \sqrt{d_{n}^{(\lambda)}}\\
0 & 0  & \ldots & \sqrt{d_{n}^{(\lambda)}} & 1
\end{bmatrix},
$$
respectively. Polynomial sequences generated by $R_{II}$-type recurrence relations were initially investigated by Ismail and Mason \cite{Masson}, and their link to generalized eigenvalue problems was first explored by Zhedanov \cite{Zhedanov-JAT1999}. Another property presented by the $n$-th degree CRR polynomial used throughout the work is that it satisfies the second order differential equation 
\begin{equation}\label{RomonoviskyEDO}
A(x)\PP^{\prime\prime}_{n}(b;x)-2B(n,\lambda,\eta;x)\PP^{\prime}_{n}(b;x)+C(n,\lambda)\mathcal{P}_{n}(b;x)=0,
\end{equation}
with $A(x)=x^{2}+1$, $B(n,\lambda,\eta;x) =(n+\lambda-1)x-\eta$ and $C(n,\lambda) = n(n+2\lambda-1)$. This equation is used in conjunction with the following results.
\begin{theorem}[Sturm Comparison Theorem]\label{SturmTheoClassic}
Let
$y$ and $Y$ be nontrivial solutions of the differential equations
$$
\begin{array}{rl}
y''(x) + f(x) y(x)\!\!&=0,\\[1.5ex]
Y''(x) + F(x) Y(x)\!\!&=0,
\end{array}
$$
respectively, where $f, F \in C(r,s)$ are continuous functions in the interval $(r,s)$, and $f(x)\leq F(x)$, $f\not\equiv F$, in $(r,s)$. Let $x_1$ and $x_2$ with $r< x_1 < x_2 < s$ be two
consecutive zeros of $y$. Then the function $Y$ has at least one sign variation in the interval
$(x_1, x_2)$.
\end{theorem}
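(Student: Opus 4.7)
The plan is to use the classical Wronskian technique. I would introduce
$$W(x) = y(x) Y'(x) - y'(x) Y(x),$$
differentiate using both differential equations, and obtain
$$W'(x) = y(x) Y''(x) - y''(x) Y(x) = \bigl(f(x) - F(x)\bigr) y(x) Y(x).$$
Integrating from $x_1$ to $x_2$ then yields the identity
$$W(x_2) - W(x_1) = \int_{x_1}^{x_2} \bigl(f(x) - F(x)\bigr) y(x) Y(x)\, dx.$$
Since $y(x_1) = y(x_2) = 0$, the endpoint values simplify to $W(x_j) = -y'(x_j) Y(x_j)$, and because zeros of nontrivial solutions of a linear second order ODE are simple, $y'(x_1)$ and $y'(x_2)$ are nonzero and of opposite signs.

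I would then argue by contradiction, assuming $Y$ has no sign change in $(x_1, x_2)$. After replacing $y$ by $-y$ and $Y$ by $-Y$ if necessary, I may suppose $y > 0$ and $Y \geq 0$ on $(x_1, x_2)$, with $y'(x_1) > 0$ and $y'(x_2) < 0$. These sign choices give $W(x_1) \leq 0$ and $W(x_2) \geq 0$, so the left-hand side of the integrated identity is nonnegative. On the other hand the integrand on the right is $\leq 0$, since $f \leq F$ and $y Y \geq 0$ on the interval. Both signs can coexist only if $W(x_1) = W(x_2) = 0$ and $(F - f)\, y\, Y \equiv 0$ throughout $(x_1, x_2)$.

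The final and most delicate step is to convert these equalities into a contradiction. Since $y > 0$ on $(x_1, x_2)$, one has $(F - f) Y \equiv 0$ there; invoking the hypothesis $f \not\equiv F$, together with continuity of $F - f$, produces an open subinterval $J \subset (x_1, x_2)$ on which $F > f$ strictly, which forces $Y \equiv 0$ on $J$. Uniqueness of the Cauchy problem for the linear second order equation $Y'' + F Y = 0$ then propagates $Y \equiv 0$ to all of $(r, s)$, contradicting its nontriviality. I expect this last step to be the principal obstacle, because it is where the strict inequality information concealed in $f \not\equiv F$ must be localized inside the interval delimited by the two consecutive zeros of $y$; the earlier Wronskian identity and the boundary sign bookkeeping are routine in comparison.
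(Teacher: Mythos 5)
The paper offers no proof of this theorem: it is quoted as a classical result with a pointer to Szeg\H{o} and Hille, so there is nothing in-paper to compare against. Your Wronskian argument is the standard proof of Sturm comparison, and the bookkeeping is all correct: the identity $W'=(f-F)yY$, the endpoint values $W(x_j)=-y'(x_j)Y(x_j)$, the simplicity of zeros of nontrivial solutions, the opposite signs of $y'(x_1)$ and $y'(x_2)$, and the conclusion that $W(x_1)=W(x_2)=0$ and $(F-f)yY\equiv 0$ on $(x_1,x_2)$ all hold.

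The one step that does not go through as written is precisely the one you flagged as delicate: from $f\not\equiv F$ \emph{on $(r,s)$} you cannot manufacture a subinterval $J\subset(x_1,x_2)$ on which $F>f$. The coefficients could agree identically on $[x_1,x_2]$ and differ only outside it; in that case one may choose $Y$ with the same Cauchy data as $y$ at $x_1$, so that $Y\equiv y$ on $[x_1,x_2]$ and $Y$ has no sign change in $(x_1,x_2)$. Thus the statement as transcribed in the paper is literally false, and no proof can close this gap. The defect is in the transcription, not in your argument: Szeg\H{o}'s Theorem 1.82.1 requires $f\not\equiv F$ on the interval $[x_1,x_2]$ between the two consecutive zeros (equivalently, one adds the alternative conclusion that $Y$ is proportional to $y$ there). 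Under that hypothesis your localization of a strict-inequality subinterval $J$, the conclusion $Y\equiv 0$ on $J$, and the propagation to $Y\equiv 0$ by uniqueness of the Cauchy problem complete the proof correctly. The paper's subsequent applications (the spacing corollary and the density theorem) compare $\Lambda_n$ with a constant and have strict inequality except at isolated points of the relevant subintervals, so they satisfy the stronger, correct hypothesis and are unaffected.
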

The Sturm Comparison Theorem can be found in Szeg\H{o}'s \cite{szego} or in Hille's book \cite{Hille}. We use it to obtain the Theorem \ref{densityR2Zeros}, and the Corollary \ref{Cor-densityR2Zeros} describing the density of the zeros of CRR polynomials and their $k$-associated CRR polynomials defined in  \eqref{3termR2-k}. We recall that the Sturm Comparison Theorem provides bounds for the zeros of several special functions (see Szego \cite{szego}). Moreover, it can be applied to prove the Convexity Theorem, which is convenient to study the convexity and spacing between consecutive zeros of polynomial functions. For further reading about convexity of some classical orthogonal polynomials we cite Jordaan and To\'okos \cite{Kerstin}. We use the Convexity Theorem with some appropriated variable transformation to demonstrate the Theorem  \ref{Thm-convexityR2Zeros}.

\begin{theorem}[Convexity Theorem]\label{Thm-convexity}
Let $y$ be a nontrivial solution of differential equation
$$
y''(x) + f(x) y(x)=0,
$$
where $f\in C(r,s)$ is a continuous function in the interval $(r,s)$ and $x_{1}<\ldots<x_{n}$ are the zeros of $y$, also in the interval $(r,s)$.
\begin{itemize}
 \item[\textup{\textbf{I}.}] If $f$ is decreasing in $(r,s)$, then $x_{k}-x_{k-1}<x_{k+1}-x_{k}$ for each $k=2,\ldots,n-1$;

 \item[\textup{\textbf{II}.}] If $f$ is increasing in $(r,s)$, then $x_{k}-x_{k-1}>x_{k+1}-x_{k}$ for each $k=2,\ldots,n-1$.
\end{itemize}
The zeros of $y$ are called convex (Resp. concave) on the interval $(r,s)$ if \textbf{I}  (Resp. \textbf{II}) holds.
\end{theorem}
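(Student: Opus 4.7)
The plan is to deduce the Convexity Theorem from the Sturm Comparison Theorem (Theorem~\ref{SturmTheoClassic}) by comparing $y$ with a translation of itself. Fix three consecutive zeros $x_{k-1} < x_k < x_{k+1}$ of $y$ and, for Case~\textbf{I}, set $\delta := x_k - x_{k-1} > 0$. Define the shifted function
\[
Y(x) := y(x + \delta),
\]
which is a nontrivial solution of $Y''(x) + F(x) Y(x) = 0$ with $F(x) := f(x + \delta)$. By construction, $Y(x_{k-1}) = y(x_k) = 0$ and $Y(x_{k+1} - \delta) = y(x_{k+1}) = 0$; moreover, since $y$ has no zero strictly between $x_k$ and $x_{k+1}$, the points $x_{k-1}$ and $x_{k+1} - \delta$ are consecutive zeros of $Y$.

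When $f$ is (strictly) decreasing on $(r,s)$ and $\delta > 0$, one has $F(x) = f(x+\delta) < f(x)$ throughout the interval $(x_{k-1}, x_{k+1} - \delta)$. Apply Theorem~\ref{SturmTheoClassic} with $Y$ in the role of the solution attached to the smaller coefficient $F$, and $y$ in the role of the solution attached to the larger coefficient $f$: the two consecutive zeros $x_{k-1}, x_{k+1}-\delta$ of $Y$ must then enclose a sign change of $y$. Since the only zero of $y$ lying in $(x_{k-1}, x_{k+1})$ is $x_k$, we are forced to have $x_k < x_{k+1} - \delta$, that is,
\[
x_k - x_{k-1} < x_{k+1} - x_k,
\]
which is the desired convexity inequality.

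Case~\textbf{II} is completely symmetric: take $\delta := x_{k+1} - x_k > 0$ and $Y(x) := y(x - \delta)$, so that $F(x) = f(x-\delta) < f(x)$ whenever $f$ is strictly increasing, and repeat the same comparison to obtain $x_{k-1} < x_k - \delta$, i.e., $x_k - x_{k-1} > x_{k+1} - x_k$. I do not foresee a genuine obstacle, as the argument is essentially a one-line invocation of Sturm once the correct shift is chosen; the only point to verify carefully is that the strict monotonicity of $f$ combined with $\delta > 0$ rules out the degenerate case $F \equiv f$ on the shifted subinterval, which is exactly the hypothesis required by Theorem~\ref{SturmTheoClassic}.
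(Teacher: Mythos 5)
Your proof is correct, and it follows exactly the route the paper itself indicates: the paper states the Convexity Theorem as a classical preliminary (remarking only that it ``can be applied to prove'' it via the Sturm Comparison Theorem, Theorem~\ref{SturmTheoClassic}) and gives no proof of its own, and your self-translation argument $Y(x)=y(x+\delta)$ with $\delta=x_k-x_{k-1}$ is the standard way of carrying that out (cf.\ Szeg\H{o}, \S 1.82). The one point worth making explicit, which you already flag, is that ``decreasing/increasing'' must be read strictly (or at least $f\not\equiv f(\cdot+\delta)$ on the relevant subinterval) so that the hypothesis $f\not\equiv F$ of Theorem~\ref{SturmTheoClassic} is met; with that reading your deduction $x_k<x_{k+1}-\delta$ from the forced sign change of $y$ in $(x_{k-1},x_{k+1}-\delta)$ is airtight.
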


Finally, in order to present upper and lower bounds for the largest and smallest zero of the CRR polynomial, we need the next technical lemma from Dimitrov and Genno \cite{DimitrovGeno}, which is a particular case of an inequality that holds for functions in the Laguerre-P\'olya class. Further information on the latter can be found, for instance, in Nikolov and Uluchevin \cite{Geno}.
\begin{lemma}\label{Lema-DG}
Let $f$ be a polynomial with only real zeros and degree $n\geq 4$. If $f(\zeta)=0$, then
$$
3(n-2)[f^{\prime\prime}(\zeta)]^{2}-4(n-1)f^{\prime}(\zeta)f^{\prime\prime\prime}(\zeta)\geq0.
$$
\end{lemma}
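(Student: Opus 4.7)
My plan is to reduce the claim to a sharpened form of the Cauchy--Schwarz inequality by combining a local factorisation of $f$ at $\zeta$ with the logarithmic derivative of $f$. The first step is a trivial case-split: if $\zeta$ is a repeated root of $f$, then $f'(\zeta)=0$ forces the right-hand side to vanish, while $3(n-2)[f''(\zeta)]^{2}\geq 0$ is automatic. I therefore assume that $\zeta$ is a simple root and write $f(x)=(x-\zeta)\,g(x)$, where $g$ is a polynomial of degree $n-1$ with real zeros $\zeta_{2},\ldots,\zeta_{n}$ (listed with multiplicity), all different from $\zeta$. Applying the Leibniz rule to the product $(x-\zeta)\,g(x)$ gives the convenient identities $f^{(k)}(\zeta)=k\,g^{(k-1)}(\zeta)$ for $k=1,2,3$, and substituting these into the target inequality reduces it to
\[
(n-2)\,[g'(\zeta)]^{2}\;\geq\;(n-1)\,g(\zeta)\,g''(\zeta).
\]

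Next I would pass to the logarithmic derivative of $g$. Since $g(\zeta)=f'(\zeta)\neq 0$, the real sums
\[
S_{1}\,:=\,\frac{g'(\zeta)}{g(\zeta)}\,=\,\sum_{k=2}^{n}\frac{1}{\zeta-\zeta_{k}}, \qquad S_{2}\,:=\,\sum_{k=2}^{n}\frac{1}{(\zeta-\zeta_{k})^{2}}
\]
are well defined, and differentiating $g'/g$ once more yields the identity $g''(\zeta)/g(\zeta)=S_{1}^{2}-S_{2}$. Dividing the reduced inequality by $g(\zeta)^{2}>0$ then rewrites it as the equivalent elementary statement $(n-1)\,S_{2}\geq S_{1}^{2}$, which is precisely the Cauchy--Schwarz inequality in $\R^{\,n-1}$ applied to the vectors $(1,\ldots,1)$ and $(1/(\zeta-\zeta_{2}),\ldots,1/(\zeta-\zeta_{n}))$.

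I do not foresee a serious obstacle: the only genuine input is the real-rootedness of $f$, which is exactly what permits the decomposition $g(x)=c\prod_{k=2}^{n}(x-\zeta_{k})$ over the reals and, in turn, the logarithmic-derivative identities above. It is pleasant that the number of terms in the sums is exactly $n-1$, matching the Cauchy--Schwarz constant perfectly, so that equality in the lemma holds if and only if $\zeta_{2}=\cdots=\zeta_{n}$, i.e., when the remaining zeros of $f$ all coincide.
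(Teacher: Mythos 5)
Your proof is correct. Note, however, that the paper does not prove this lemma at all: it is quoted from Dimitrov and Nikolov \cite{DimitrovGeno}, where it arises as a particular case of a family of inequalities valid for functions in the Laguerre--P\'olya class (see also Nikolov and Uluchev \cite{Geno}). Your argument is therefore a genuinely independent, self-contained and pleasantly elementary derivation: after disposing of multiple roots (where $f'(\zeta)=0$ annihilates the second term), the factorization $f=(x-\zeta)g$ and the identities $f^{(k)}(\zeta)=k\,g^{(k-1)}(\zeta)$, $k=1,2,3$, reduce the claim to $(n-2)[g'(\zeta)]^2\geq (n-1)\,g(\zeta)g''(\zeta)$ (the factor $12$ cancels on both sides), and the logarithmic-derivative identities $g'(\zeta)/g(\zeta)=S_1$ and $g''(\zeta)/g(\zeta)=S_1^2-S_2$ convert this into $(n-1)S_2\geq S_1^2$, which is exactly Cauchy--Schwarz over the $n-1$ remaining zeros. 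Real-rootedness enters precisely where it must: it guarantees that the $\zeta_k$, hence $S_1$ and $S_2$, are real, so that Cauchy--Schwarz applies. What your route buys is transparency and an explicit equality analysis; what the paper's route buys is access to the sharper and more general inequalities of \cite{DimitrovGeno}, which are stated for arbitrary degree-$n$ real-rooted polynomials in a form tuned for extreme-zero bounds. The only cosmetic caveat is your closing remark on equality: it describes the simple-root case only (in the repeated-root case equality additionally forces $f''(\zeta)=0$, i.e.\ a zero of multiplicity at least three), but this does not affect the lemma.
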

\section{Extreme Bounds, convexity and density}
\label{main}

We start this section by presenting the bounds for the largest and the smallest zero of the polynomial $\PP_{n}(b;x)$ in the next theorem, which is applied later on to obtain inequalities for the distance between two consecutive zeros of CRR polynomial in the Corollary \ref{coro-dist-zeros}.

\begin{theorem}\label{Thm-bounds}
  Let $n \geq  4$.  Then
  $$
  \frac{\eta  [n^2+2 \lambda +n(\lambda -1) ]- (n-1)\sqrt{ \Delta_{n}}}{\lambda  [2(n-1)+(n+2)\lambda]}<x_{n,1}^{(\eta,\lambda)}
<
  x_{n,n}^{(\eta,\lambda)}<\frac{\eta  [n^2+2 \lambda +n(\lambda -1)]+ (n-1)\sqrt{ \Delta_{n}}}{\lambda  [2(n-1)+(n+2)\lambda]},
  $$
  where
  $$
  \Delta_{n}= [\eta ^2+\lambda  (\lambda +2)]n^2+  2 \lambda (\eta ^2+\lambda ^2+2 \lambda -3)n+4 \lambda  [\eta ^2+(\lambda -1)^2].
  $$
\end{theorem}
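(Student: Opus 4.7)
The plan is to combine Lemma \ref{Lema-DG} with the second order ODE \eqref{RomonoviskyEDO} satisfied by $\PP_n(b;x)$. Since the zeros of $\PP_n(b;x)$ are real and simple for $n \ge 2$, Lemma \ref{Lema-DG} applies at any zero $\zeta = x_{n,k}^{(\eta,\lambda)}$ once $n \ge 4$. The ODE evaluated at $\zeta$, together with its first derivative evaluated at $\zeta$, will let us express $\PP_n''(b;\zeta)$ and $\PP_n'''(b;\zeta)$ in terms of $\PP_n'(b;\zeta)$, whereupon the Laguerre--Pólya inequality collapses to a quadratic inequality in $\zeta$ whose roots give the claimed bounds.

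Concretely, from \eqref{RomonoviskyEDO} at $\zeta$ we get
\[
(\zeta^2+1)\PP_n''(b;\zeta) = 2\bigl[(n+\lambda-1)\zeta-\eta\bigr]\PP_n'(b;\zeta).
\]
Differentiating \eqref{RomonoviskyEDO} once and evaluating at $\zeta$ (using $A'=2x$ and $B' = n+\lambda-1$) yields
\[
(\zeta^2+1)\PP_n'''(b;\zeta) = 2\bigl[(n+\lambda-2)\zeta-\eta\bigr]\PP_n''(b;\zeta) - (n-1)(n+2\lambda-2)\PP_n'(b;\zeta),
\]
so after substituting the previous identity I will obtain $\PP_n'''(b;\zeta)$ as an explicit multiple of $\PP_n'(b;\zeta)$. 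Plugging these into $3(n-2)[\PP_n''(b;\zeta)]^2 \ge 4(n-1)\PP_n'(b;\zeta)\PP_n'''(b;\zeta)$ and dividing through by $[\PP_n'(b;\zeta)]^2/(\zeta^2+1)^2 > 0$ (valid because the zeros are simple) reduces the lemma to the polynomial inequality
\[
3(n-2)[a\zeta-\eta]^2 - 4(n-1)[a\zeta-\eta][(a-1)\zeta-\eta] + (n-1)^2(n+2\lambda-2)(\zeta^2+1) \ge 0,
\]
where I set $a = n+\lambda-1$.

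The next step is to collect powers of $\zeta$ in this inequality. A direct expansion gives the coefficient of $\zeta^2$ as $-\lambda\bigl[2(n-1)+(n+2)\lambda\bigr]$, which is strictly negative since $\lambda>0$, and the coefficient of $\zeta$ as $2\eta\bigl[n^2+n(\lambda-1)+2\lambda\bigr]$; the constant term comes out to $-(n+2)\eta^2 + (n-1)^2(n+2\lambda-2)$. Because the leading coefficient is negative, the quadratic is $\ge 0$ precisely between its two real roots, which by Vieta match exactly the two expressions in the theorem; the discriminant simplification will collapse to $4(n-1)^2\Delta_n$. Since every zero $\zeta$ of $\PP_n(b;x)$, including the extreme ones $x_{n,1}^{(\eta,\lambda)}$ and $x_{n,n}^{(\eta,\lambda)}$, must lie between those roots, the bounds follow. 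The strictness of the inequalities comes from the fact that the zeros of $\PP_n(b;x)$ are all simple, which prevents equality in Lemma \ref{Lema-DG} from being attained at the extreme zeros.

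The main obstacle is purely computational: carrying out the expansion that produces the coefficients of the quadratic in $\zeta$ and then verifying the discriminant identity
\[
\bigl\{2\eta[n^2+n(\lambda-1)+2\lambda]\bigr\}^2 + 4\lambda\bigl[2(n-1)+(n+2)\lambda\bigr]\bigl\{-(n+2)\eta^2 + (n-1)^2(n+2\lambda-2)\bigr\} = 4(n-1)^2\Delta_n.
\]
This is a routine but lengthy polynomial identity in $n$, $\lambda$, $\eta$, and it is the only real bookkeeping burden in the argument.
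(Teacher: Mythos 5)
Your proposal is correct and follows essentially the same route as the paper's own proof: express $\PP_n''(b;\zeta)$ and $\PP_n'''(b;\zeta)$ at a zero via the differential equation \eqref{RomonoviskyEDO}, substitute into Lemma \ref{Lema-DG}, and read off the resulting quadratic $Q(\zeta)\ge 0$ with negative leading coefficient; your coefficients $-\lambda[2(n-1)+(n+2)\lambda]$, $2\eta[n(n+\lambda-1)+2\lambda]$ and $-(n+2)\eta^2+(n-1)^2(n+2\lambda-2)$ agree exactly with the paper's $q_2,q_1,q_0$, and the discriminant identity you flag does reduce to $4(n-1)^2\Delta_n$. Your closing remark on strictness via simplicity of the zeros is a point the paper glosses over, but it is consistent with the equality case of the lemma.
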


\begin{proof}
We know that $\PP_{n}(b;x)$ satisfies Equation \eqref{RomonoviskyEDO}. Let $y(x) = \PP_{n}(b;x)$.  If $y(\zeta) = \PP_{n}(b;\zeta)=0$, then straightforward manipulations lead to

\begin{equation} \label{Eq-Dif-Eq-2}
y^{\prime\prime}(\zeta)=2\frac{(n+\lambda-1)\zeta-\eta}{\zeta^{2}+1}y^{\prime}(\zeta),
\end{equation}
and
$$
y^{\prime\prime\prime}(\zeta)=\frac{2[(n+\lambda-2)\zeta-\eta]y^{\prime\prime}(\zeta)-[(n-2)(n+\lambda-1)+n\lambda]y^{\prime}(\zeta)}{\zeta^{2}+1}.
$$

Thus, from Lemma \ref{Lema-DG}, we have
$$
\left[2\frac{y^{\prime}(\zeta)}{\zeta^{2}+1}\right]^{2}Q(\zeta)\geq0,
$$
where we defined $Q(x)=q_{2}x^{2}+q_{1}x+q_{0}$ with
\begin{align*}
q_{2}&=- \lambda  \left[(n+2)\lambda+ 2(n-1)\right],\\
q_{1}&=2 \eta  \left[n(n+\lambda-1)+2\lambda\right],\\
q_{0}&= n^3+2 (\lambda -2) n^2- \left(\eta ^2+4 \lambda -5\right)n -2 \left(\eta ^2-\lambda +1\right).
\end{align*}
The two real roots of the polynomial $Q(x)$ are
\[
x_{1,2}=\frac{\eta  [n^2+2 \lambda +n(\lambda -1) ]\pm(n-1)\sqrt{ \Delta_{n}}}{\lambda  [2(n-1)+(n+2)\lambda]},
\]
where $\Delta_{n}$ is defined in the theorem's statement, and we note that $\Delta_{n}>0$.  Since $q_{2}$ is negative, it follows that all the zeros of $\PP_{n}(b;x)$ are in the interval $(x_{1},x_{2})$.
\end{proof}

As anticipated, we solved numerically for the largest and smallest zeros of some representative CRR polynomials, and the corresponding bounds build from Theorem \ref{Thm-bounds}. Moreover, those bounds were tested and compared with previous bounds found in Jordaan and To\'okos \cite{Kerstin2014} and Mart\'inez-Finkelshtein et al. \cite{Veronese}, which can be determined with a few preliminary manipulations. The bounds found in Jordaan and To\'okos \cite{Kerstin2014} are connected to the CRR polynomials via their relation with the complexifield Jacobi polynomials, whereas the bounds given in Mart\'inez-Finkelshtein et al. \cite{Veronese} follow from the connection between $\PP_{n}(b;x)$ and the para-orthogonal polynomials explored in Section 2 of Mart\'inez-Finkelshtein et al. \cite{MRRT-CRR-2019}.

Our findings are summarized in Tables 1 and 2. The columns labeled by RRY contain the results from Theorem \ref{Thm-bounds}, whereas the ones labeled by JT and MRV contain the results found with aid of Jordaan and To\'okos \cite{Kerstin2014} and Mart\'inez-Finkelshtein et al. \cite{Veronese}, respectively. Moreover, for a given set of parameters, sharper bounds were highlighted, and the lower and upper bounds are discriminated by the subscripts $min$ and $max$, respectively. Our simulations for different values of $n$, $\lambda$ and $\eta$ reveal some interesting patterns. As shown in Table 1, by fixing $\eta=2$ and $n=30$, the bounds given by Theorem \ref{Thm-bounds} are less accurate than the previous ones when the parameter $\lambda$ is small. For example, for $\lambda=0.75$ and $\lambda=1.75$, the bounds obtained in Jordaan and To\'okos \cite{Kerstin2014} and Mart\'inez-Finkelshtein et al. \cite{Veronese} are sharper. However, as $\lambda$ increases the bounds from Theorem \ref{Thm-bounds} are always sharper, as shown in Table 1 for $\lambda$ as $5,10,15,20,25$ and $70$. Furthermore, even for smaller values of $\lambda$, Theorem \ref{Thm-bounds} seems to always provide better bounds if $\eta$ is not too small, as shown in Table 2.

\begin{center}
\begin{table}[!ht]%
\centering
\caption{Smallest and largest zeros, and extreme bounds when $\eta=2$ and $n=30$. The
$RRY_{max}$ and $RRY_{min}$ are the upper and lower extreme bounds, respectively, for the zeros obtained from Theorem \ref{Thm-bounds}. $JT_{max}$, $MRV_{max}$, $JT_{min}$ and $MRV_{min}$ are the upper and lower extreme bounds obtained by Jordaan and To\'okos \cite{Kerstin2014} and Mart\'inez-Finkelshtein et al. \cite{Veronese}, respectively.
\label{table3-bounds}}%
\begin{tabular*}{350pt}{@{\extracolsep\fill}crrrr@{\extracolsep\fill}}
\toprule
\textbf{$\lambda$} & \textbf{$x_{n,1}^{(\eta,\lambda)}$}  & \textbf{$RRY_{min}$}  & \textbf{$MRV_{min}$}  & \textbf{$JT_{min}$} \\
\midrule
0.75 & $-3.60366$ & $-6.23369$ &  $-6.40957$ &  $\bf{-4.97174}$ \\
1.75 & $-3.56614$ & $-5.49264$ &  $\bf{-5.12489}$ &  $-5.13691$ \\
5    & $-2.94731$ & $\bf{-3.83491}$ &  $-4.12943$ &  $-4.05698$ \\
10   & $-2.24406$ & $\bf{-2.69331}$ &  $-2.96588$ &  $-2.87276$ \\
15   & $-1.84833$ & $\bf{-2.14342}$ &  $-2.34765$ &  $-2.28402$ \\
20   & $-1.59757$ & $\bf{-1.81694}$ &  $-1.97708$ &  $-1.93278$ \\
25   & $-1.42313$ & $\bf{-1.59827}$ &  $-1.72983$ &  $-1.69740$ \\
70   & $-0.83027$ & $\bf{-0.89890}$ &  $-0.95410$ &  $-0.94747$ \\
\midrule
\textbf{$\lambda$} & \textbf{$x_{n,n}^{(\eta, \lambda)}$}  & \textbf{$RRY_{max}$}  & \textbf{$MRV_{max}$}  & \textbf{$JT_{max}$}\\
\midrule
0.75 & $51.93148 $ & $64.38003$ & $133.35633$ &  $\bf{62.30239}$\\
1.75 & $18.22559$ & $24.05906$ & $\bf{19.87006}$ &  $24.48929$ \\
5    & $5.99823$ & $\bf{7.61473}$ & $8.41346$ &  $8.05698$ \\
10   & $3.31253$ & $\bf{3.95257}$ & $4.44314$ &  $4.20609$  \\
15   & $2.43428$ & $\bf{2.81256}$ & $3.11345$ &  $2.98990$  \\
20   & $1.98455$ & $\bf{2.24960}$ & $2.46186$ &  $2.38732$  \\
25   & $1.70595$ & $\bf{1.90970}$ & $2.07325$ &  $2.02333$  \\
70   & $0.90430$ & $\bf{0.97623}$ & $1.03484$ &  $1.02684$  \\
\bottomrule
\end{tabular*}
\end{table}
\end{center}

\begin{center}
\begin{table}[!ht]%
\centering
\caption{Smallest and largest zeros, and extreme bounds when $\lambda=1.5$ and $n=4$. The
$RRY_{max}$ and $RRY_{min}$ are the upper and lower extreme bounds, respectively, for the zeros obtained from Theorem \ref{Thm-bounds}. $JT_{max}$, $MRV_{max}$, $JT_{min}$ and $MRV_{min}$ are the upper and lower extreme bounds obtained by Jordaan and To\'okos \cite{Kerstin2014} and Mart\'inez-Finkelshtein et al. \cite{Veronese}, respectively.\label{table3-bounds}}%
\begin{tabular*}{350pt}{@{\extracolsep\fill}crrrr@{\extracolsep\fill}}
\toprule
\textbf{$\eta$} & \textbf{$x_{n,1}^{(\eta, \lambda)}$}  & \textbf{$RRY_{min}$}  & \textbf{$MRV_{min}$}  & \textbf{$JT_{min}$} \\
\midrule
0   & $-1.18804$ & $-1.41421$      & $\bf{-1.28644}$ & $-1.61803$      \\
0.5 & $-0.81060$ & $-1.00000$      &  $\bf{-0.97680}$& $-1.13951$      \\
1.5 & $-0.31265$ & $-0.43303$      &  $-0.50471$     &$\bf{-0.41745}$  \\
5   & $-1.84833$ & $\bf{-2.14342}$ &  $-2.34765$ &  $-2.28402$  \\
7   & $0.96538$ & $\bf{0.91036}$ &  $0.76104$ &  $0.85254$  \\
15   & $2.33259$ & $\bf{2.25266}$ &  $1.98870$ &  $2.07881$ \\
35   & $5.58634$  & $\bf{5.41883}$  &  $4.83529$ &  $4.99376$  \\
90   & $14.43584$ & $\bf{14.01430}$ &  $12.52994$ &  $12.91264$   \\
\midrule
\textbf{$\eta$} & \textbf{$x_{n,n}^{(\eta,\lambda)}$}  & \textbf{$RRY_{max}$}  & \textbf{$MRV_{max}$}  & \textbf{$JT_{max}$}\\
\midrule
0   &  $1.18804$ & $1.41421$      & $\bf{1.28644}$ & $1.61803$  \\
0.5 &  $1.68062$ & $1.9333$       & $\bf{1.69224}$ & $2.18714$  \\
1.5 &  $2.94807$ & $3.23303$      & $\bf{3.31012}$ &  $3.56030$ \\
5   &  $2.43428$ & $\bf{2.81256}$ & $3.11345$ &  $2.98990$  \\
7   &  $11.54147$ & $\bf{12.15630}$ & $13.75984$ &  $12.96786$  \\
15   & $24.51550$ & $\bf{25.74734}$ & $29.32890$ &  $27.39236$  \\
35   & $57.08690$ & $\bf{59.91450}$ & $68.35048$ &  $63.70115$  \\
90   & $146.73775$ & $\bf{153.98571}$ & $175.71725$ &  $163.69722$  \\
\bottomrule
\end{tabular*}
\end{table}
\end{center}


Numerical simulations for different values of $\lambda$ also show that for larger values of $\lambda$ the results given by Theorem \ref{Thm-bounds} are somewhat optimal. In fact, in Figure \ref{fig1-1} we have plotted the zeros and the bounds as functions of the parameter $\lambda$ for two different and fixed sets of values $n$ and $\eta$. In Figure \ref{fig1-1} the continuous curves from bottom ($k=1$) to top ($k=n$) represent the plots of  $x_{n,k}^{(\eta,\lambda)}$ as a function of $\lambda$ with a fixed $n$ and $\eta$. The lower dashed curve represents the lower extreme bound for the zeros and the upper dot-dashed curve represents the upper extreme bound for the zeros, both obtained from Theorem \ref{Thm-bounds}. Clearly, these simulations suggest that for larger values of $\lambda$ the upper dot-dashed curve approaches the curve of $x_{n,n}^{(\eta,\lambda)}$ and similarly for the lower dashed curve with the one from $x_{n,1}^{(\eta,\lambda)}$.

\begin{figure*}[ht!]
\begin{minipage}[b]{0.48\linewidth}
 \centering
 \includegraphics[width=0.90\linewidth]{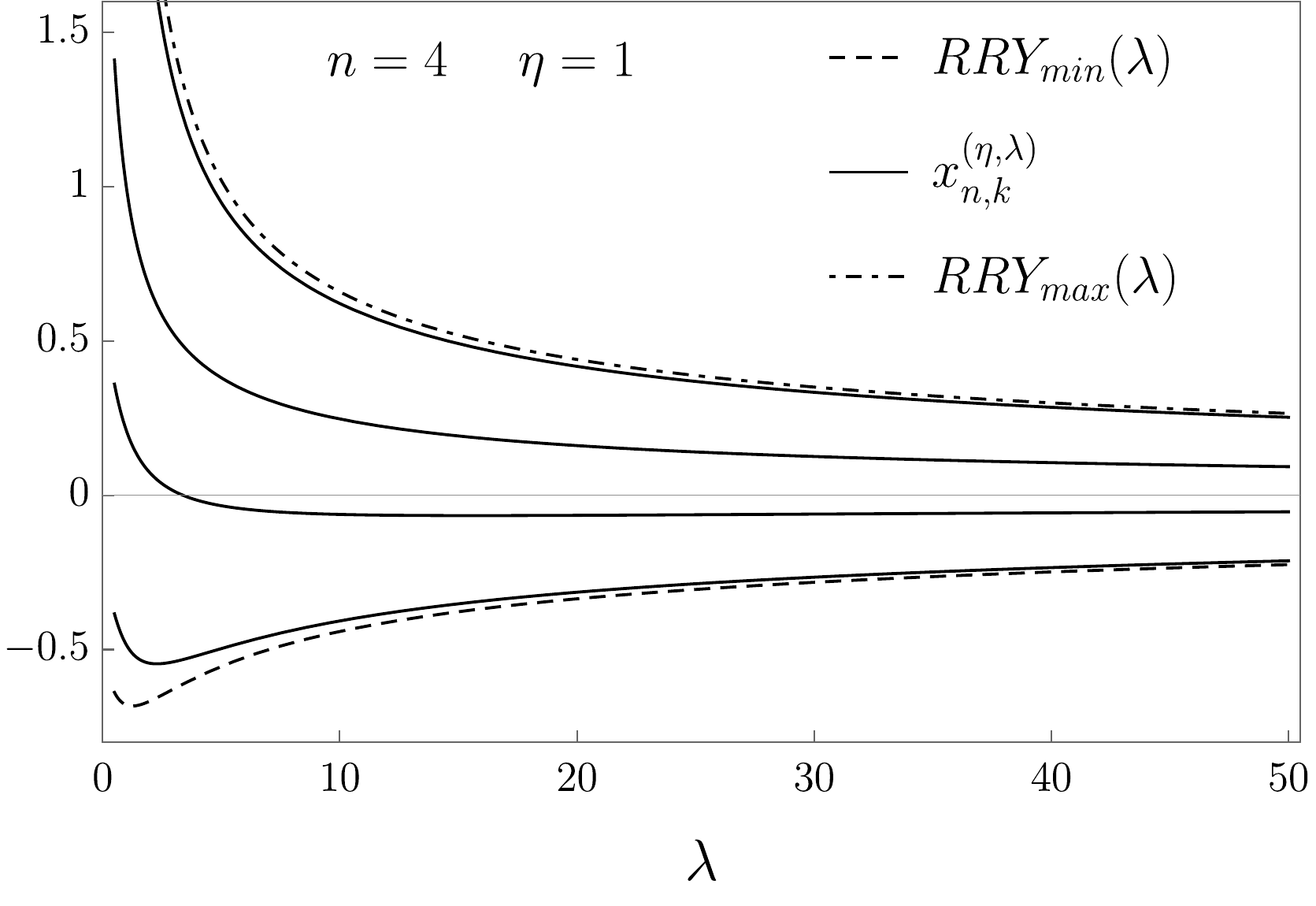} \\[-1ex]
\end{minipage} \hspace{0.02\linewidth}
\begin{minipage}[b]{0.48\linewidth}
 \centering
 \includegraphics[width=0.90\linewidth]{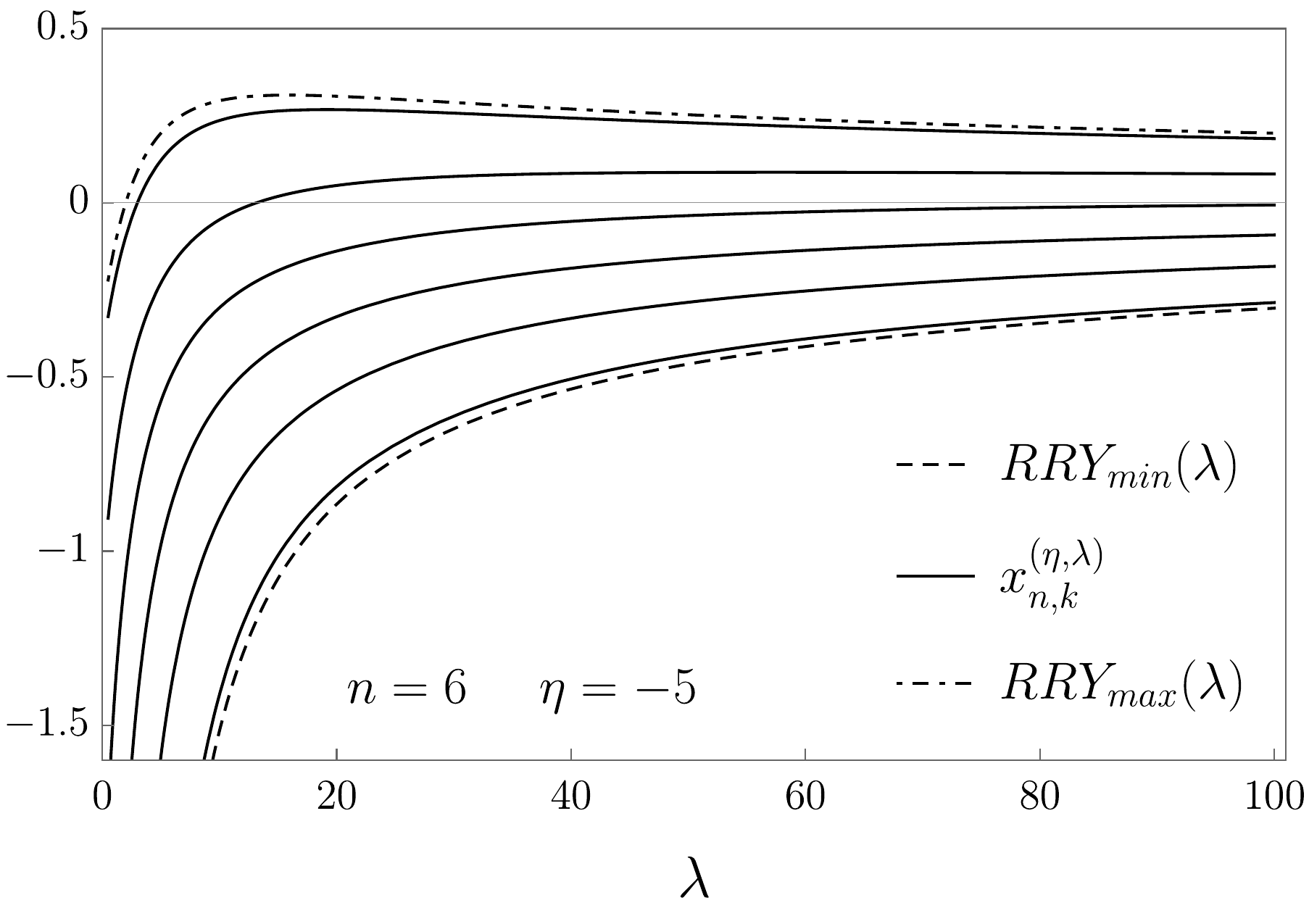} \\[-1ex]
\end{minipage}
\caption{ The inner curves represent the graphs of $x_{n,k}^{(\eta,\lambda)}$, $k=1, \ldots, n$,  as functions of $\lambda$ for fixed values of $n$ and $\eta$. The $RRY_{min}$ and $RRY_{max}$ are the upper and lower extreme bounds, respectively, for the zeros obtained from Theorem \ref{Thm-bounds}. }
\label{fig1-1}
\end{figure*}

\vfill \pagebreak

The following theorem gives some information about the convexity of the zeros of the $n$-th CRR polynomial with respect to the parameters $\lambda$ and $\eta$.

\begin{theorem}\label{Thm-convexityR2Zeros}
Let $\mathfrak{M}_{n}=\mathfrak{M}_{n}(b)=\eta(n+\lambda)/\lambda(\lambda-1)$ with $b=\lambda+i\eta$, $\lambda>0$ and $\eta\in\mathbb{R}$.
If $\lambda > 1$ and
\begin{enumerate}
\item[\textup{\textbf{I}.}] $\eta=0$, then the negative (Resp. positive) zeros of $\PP_{n}(b;x)$ are concave (Resp. convex);
\item[\textup{\textbf{II}.}] $\eta>0$, then the negative zeros of $\PP_{n}(b;x)$ are concave and the positive zeros of $\PP_{n}(b;x+\mathfrak{M}_{n})$ are convex;
\item[\textup{\textbf{III}.}] $\eta<0$, then the negative zeros of $\PP_{n}(b;x+\mathfrak{M}_{n})$ are concave and the positive zeros of $\PP_{n}(b;x)$ are convex.
\end{enumerate}
If $0<\lambda<1$ and
\begin{enumerate}
\item[\textup{\textbf{VI}.}] $\eta>0$, then the zeros of $\PP_{n}(b;x)$ in $(\mathfrak{M}_{n},0)$ are concave;
\item[\textup{\textbf{V}.}] $\eta<0$, then the zeros of $\PP_{n}(b;x)$ in $(0,\mathfrak{M}_{n})$ are convex;
\end{enumerate}
If $\lambda=1$ and $\eta>0$ (Resp. $\eta<0$), then all negative (Resp. positive) zeros are concave (Resp. convex).
\end{theorem}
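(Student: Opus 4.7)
The plan is to reduce the CRR differential equation \eqref{RomonoviskyEDO} to the self-adjoint form $y''(x)+f(x)y(x)=0$ required by the Convexity Theorem \ref{Thm-convexity}, and then analyze the intervals of monotonicity of $f$. Write $\PP_{n}(b;x)=v(x)y(x)$ with
\[
v(x)=(x^{2}+1)^{(n+\lambda-1)/2}\,e^{-\eta\arctan x},
\]
which is strictly positive on $\mathbb{R}$ and is tailored to annihilate the first-derivative term in \eqref{RomonoviskyEDO}. A direct computation with $A(x)=x^{2}+1$, $B(n,\lambda,\eta;x)=(n+\lambda-1)x-\eta$, $C(n,\lambda)=n(n+2\lambda-1)$ then gives
\[
f(x)\;=\;\frac{CA+B'A-BA'-B^{2}}{A^{2}}\;=\;\frac{-\lambda(\lambda-1)(x-\mathfrak{M}_{n})^{2}+K'}{(x^{2}+1)^{2}}
\]
(after completing the square, valid for $\lambda\neq 1$), where $K'$ is an explicit constant depending on $n,\lambda,\eta$. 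Since $v>0$, the zeros of $y$ coincide with those of $\PP_{n}(b;x)$, and the shift $x\mapsto x+\mathfrak{M}_{n}$ appearing in parts II--V is precisely the re-centering that puts the axis of the parabola in the numerator of $f$ at the origin.

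Next, I would run a sign analysis of $f'(x)$, whose numerator is proportional to the cubic
\[
P(x)=\lambda(\lambda-1)x^{3}-3\eta(n+\lambda)x^{2}-[\lambda(\lambda-1)+2K]x+\eta(n+\lambda),
\]
where $K=(n+\lambda)^{2}-1-\lambda(\lambda-1)-\eta^{2}$. When $\eta=0$ and $\lambda>1$, $P$ factors as $x\bigl[\lambda(\lambda-1)x^{2}-L\bigr]$ with $L>0$, so $f$ is increasing on $(-\sqrt{L/\lambda(\lambda-1)},0)$ and decreasing on $(0,\sqrt{L/\lambda(\lambda-1)})$, which by Theorem \ref{Thm-convexity} gives case I. For $\eta>0$, $\lambda>1$ the identity
\[
P(\mathfrak{M}_{n})=-2\mathfrak{M}_{n}\bigl[\lambda(\lambda-1)\mathfrak{M}_{n}^{2}+K\bigr],
\]
together with $P(0)=\eta(n+\lambda)>0$, $P(\pm\infty)=\pm\infty$, and the fact that $\lambda(\lambda-1)\mathfrak{M}_{n}^{2}+K>0$, forces $P$ to have three real roots $r_{1}<0<r_{2}<\mathfrak{M}_{n}<r_{3}$. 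Hence $f$ increases on $(r_{1},r_{2})$, which contains all negative zeros of $\PP_{n}$, and decreases on $(r_{2},r_{3})$, which contains the zeros of $\PP_{n}$ lying in $(\mathfrak{M}_{n},\infty)$, i.e.\ the positive zeros of $\PP_{n}(b;x+\mathfrak{M}_{n})$; this is case II. Case III is the mirror image of II under $(\eta,x)\mapsto(-\eta,-x)$, a symmetry visible in the recurrence \eqref{3termR2TTT}. The two $0<\lambda<1$ cases run on the same scheme but with $\lambda(\lambda-1)<0$, so the parabola in the numerator of $f$ now opens upward and a parallel sign analysis of $P$ yields the required monotonicity on the bounded interval $(\mathfrak{M}_{n},0)$ or $(0,\mathfrak{M}_{n})$. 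The borderline $\lambda=1$ is handled separately, since the numerator of $f$ is then affine and $P$ degenerates to a quadratic whose real roots can be located by inspection.

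The delicate point, and likely the main obstacle, is verifying that the relevant zeros of $\PP_{n}$ do lie strictly inside the chosen open monotonicity interval $(r_{i},r_{i+1})$ of $f$ rather than spilling past one of its endpoints. One direct route is to invoke the extreme-zero bounds of Theorem \ref{Thm-bounds} and compare them algebraically with the explicit expressions for the $r_{i}$. A more conceptual alternative is a Sturm-type comparison of $y''+fy=0$ with $Y''=0$: on any interval where $f\le 0$ the equation $y''+fy=0$ admits at most one zero of $y$, so the bulk of the $n$ zeros of $\PP_{n}$ are forced into the oscillatory region $\{f>0\}$, which is itself strictly contained in the monotonicity interval identified above. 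Either way, the Convexity Theorem \ref{Thm-convexity} then applies on the appropriate interval and delivers the claimed convexity or concavity pattern in every case of the statement.
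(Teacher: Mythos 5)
Your reduction to the normal form $y''+f(x)y=0$ is correctly set up: the multiplier $(x^{2}+1)^{(n+\lambda-1)/2}e^{-\eta\arctan x}$ is the right integrating factor, the numerator of $f$ is indeed $-\lambda(\lambda-1)(x-\mathfrak{M}_{n})^{2}+K'$, and your cubic $P$ (with $K=n^2+2n\lambda+\lambda-1-\eta^2$) is the correct numerator of $f'$; the sign analysis $r_{1}<0<r_{2}<\mathfrak{M}_{n}<r_{3}$ also checks out. The genuine gap is exactly the one you flag and do not close: because you work in the $x$ variable, the factor $(x^{2}+1)^{-2}$ turns the single critical point of the numerator into three critical points of $f$, so your monotonicity intervals $(r_{1},r_{2})$ and $(r_{2},r_{3})$ are \emph{bounded}, and you never prove that the extreme zeros $x_{n,1}^{(\eta,\lambda)}$ and $x_{n,n}^{(\eta,\lambda)}$ lie inside them. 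Neither of your proposed repairs works as stated: invoking Theorem \ref{Thm-bounds} would restrict the result to $n\geq4$ (the theorem has no such hypothesis) and still requires a nontrivial algebraic comparison with the roots of $P$; and the Sturm comparison with $Y''=0$ only shows that every gap between consecutive zeros meets the set $\{f>0\}$, which still permits the two \emph{extreme} zeros to sit outside $\{f>0\}\subset(r_{1},r_{3})$ --- precisely the zeros for which the spacing claim would then be unproved.

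The paper avoids this obstruction entirely by passing to the Liouville normal form \emph{after} the substitution $x=\tan\theta$: the resulting coefficient $\Lambda_{n}(\theta)=\lambda(1-\lambda)\tan^{2}\theta+2\eta(n+\lambda)\tan\theta+n^{2}+2n\lambda+\lambda-\eta^{2}$ is (up to an additive constant) your numerator $N$ composed with $\tan$, the change of variable having absorbed the denominator $(x^{2}+1)^{2}$. Consequently $\Lambda_{n}$ has a \emph{unique} critical point $\arctan(\mathfrak{M}_{n})$ in $(-\pi/2,\pi/2)$, the two monotonicity intervals automatically contain every zero, and the Convexity Theorem applies with no containment issue; one then transfers the conclusion back to the $x_{n,k}^{(\eta,\lambda)}=\tan(\theta_{n,k}^{(\eta,\lambda)})$ using that $\tan$ is increasing and concave on $(-\pi/2,0)$, convex on $(0,\pi/2)$. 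Your symmetry reduction of case \textbf{III} to case \textbf{II} via $(-1)^{n}\PP_{n}(\overline{b};-x)=\PP_{n}(b;x)$ coincides with the paper's. If you want to keep your $x$-space argument, you must supply a genuine proof that all zeros lie in $(r_{1},r_{3})$ (equivalently, that the extreme zeros lie inside the turning-point interval $\{N>0\}$), which is not automatic; otherwise, switch to the $\theta$ variable.
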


\begin{proof}

The Differential Equation \eqref{RomonoviskyEDO} in the Sturm-Liouville form reads
\begin{equation}\label{RomonoviskyEDO S-L form with theta}
v_{n}^{\prime\prime}(\theta)+\Lambda_{n}(\theta) v_{n}(\theta)=0,
\end{equation}
where
\begin{align} \label{RomonoviskyEDO S-L form solution with theta}
\begin{split}
\Lambda_{n}(\theta)& =\lambda(1-\lambda)\tan^{2}(\theta)+2\eta(n+\lambda)\tan(\theta)+n^{2}+2n\lambda+\lambda-\eta^{2}, \\
v_{n}(\theta)& = e^{\eta\theta} \cos(\theta)^{n+\lambda}\,  \PP_{n}(b;\tan(\theta)),
\end{split}
\end{align}
with $\theta\in(-\pi/2,\pi/2)$. The details of the transformation to the Sturm-Liouville form can be found in Section 1.8 of Szeg\H{o} \cite{szego}. We note that the zeros $\theta_{n,k}^{(\eta,\lambda)}$ of $v_{n}(\theta)$ are such that $x_{n,k}^{(\eta,\lambda)} = \tan(\theta_{n,k}^{(\eta,\lambda)})$ and
$$
-\pi/2 < \theta_{n,1}^{(\eta,\lambda)} < \theta_{n,2}^{(\eta,\lambda)} < \cdots < \theta_{n,n}^{(\eta,\lambda)} < \pi/2.
$$
From Equation \eqref{RomonoviskyEDO S-L form solution with theta} it follows that
\begin{equation*}\label{derivative}
  \frac{d\Lambda_{n}(\theta)}{d\theta}=\frac{2}{\cos^{2}(\theta)}[\eta(n+\lambda)-\lambda(\lambda-1)\tan(\theta)]=0
\end{equation*}
if and only if $\tan(\theta)=\eta(n+\lambda)/\lambda(\lambda-1)$.

Suppose first that $\lambda>1$. Then if $\eta>0$, we have that $\mathfrak{M}_{n}>0$, $\Lambda_{n}$ is decreasing in the interval $(\arctan(\mathfrak{M}_{n}),\pi/2)$, and increasing in $(-\pi/2,\arctan(\mathfrak{M}_{n}))$. Therefore, by Theorem  \ref{Thm-convexity}, the negative zeros of $v_{n}(\theta)$ are concave and the positive zeros of $v_{n}(\theta)$ that belong to the interval $(\arctan(\mathfrak{M}_{n}),\pi/2)$ are convex. Since the tangent function is concave in $(-\pi/2,0)$, convex in $(\arctan(\mathfrak{M}_{n}),\pi/2)\subset(0,\pi/2)$ and increasing in $(-\pi/2,\pi/2)$, we conclude that the negative zeros of $\PP_{n}(b;x)$ are concave and the positive zeros of $\PP_{n}(b;x)$ which are in the interval $(\mathfrak{M}_{n},\infty)$ are convex. When $\eta<0$, the statement is a consequence of the identity $(-1)^{n}\PP_{n}(\overline{b};-x)=\PP_{n}(b;x)$, where $\overline{b}=\lambda-i\eta$, and for the case of $\eta=0$, the proof is immediate.

Finally, the proof when $0<\lambda<1$ is analogous, and for the remaining $\lambda=1$ case, we note that sign($d\Lambda_{n}(\theta)/d\theta$)=sign($\eta$), where $\textup{sign}(\eta)=1$ if $\eta\geq 0$ and $\textup{sign}(\eta)=-1$ if $\eta<0$, and the conclusion follows immediately by Theorem \ref{Thm-convexity}.
\end{proof}

It should be remarked that our result does not establish the convexity behavior for all the zeros for a given parameter choice. For instance, when $\lambda>1$ and $\eta>0$, the above theorem asserts that the zeros located on the right hand side of $\mathfrak{M}_{n}$ are convex and the ones located on the left hand side of the origin are concave. However, for this case, our result cannot be used to draw any conclusion about the zeros located in the interval $(0,\mathfrak{M}_{n})$.

The next corollary presents inequalities for the parametrized distance between two consecutive zeros of the $n$-th CRR polynomial.

\begin{corollary}\label{coro-dist-zeros} Let $-\pi/2 < \theta_{n,1}^{(\eta,\lambda)} < \theta_{n,2}^{(\eta,\lambda)} < \cdots < \theta_{n,n}^{(\eta,\lambda)} < \pi/2
$ be given by $\theta_{n,k}^{(\eta,\lambda)}=\textup{arctan}(x_{n,k}^{(\eta,\lambda)})$, for $k=1,\ldots,n$, and let
\begin{align*}
& f_{n}^{(\eta,\lambda)}=[n^2+\lambda(2n+1)]\left\{1-\frac{\eta^2}{\lambda(1-\lambda)}\right\}, \\
& g_{n}^{(\eta)}= (n+1)^2 + \frac{1}{3n}\left\{\eta^2[n(n+1)^2+n^3+4]+\textup{sign}(\eta)\,2\eta(n^2-1)\sqrt{\eta^2(n+1)^2+3(\eta^2+n^2)}\right\},
\end{align*}
where $\textup{sign}(\eta)=1$ if $\eta\geq 0$ and $\textup{sign}(\eta)=-1$ if $\eta<0$.
\begin{itemize}
 \item[\textup{\textbf{I}.}] If $0<\lambda<1$ and $\eta^2\leq\lambda(1-\lambda)$, then
\begin{equation*}
\theta^{(\eta,\lambda)}_{n,k+1}-\theta^{(\eta,\lambda)}_{n,k}\leq \left(\frac{\pi^2}{f_{n}^{(\eta,\lambda)}}\right)^{1/2}, \quad k=1,\ldots,n-1.
\end{equation*}
	\item[\textup{\textbf{II}.}] If $\lambda>1$ and $\eta\in\R$, then
\begin{equation*}
\theta^{(\eta,\lambda)}_{n,k+1}-\theta^{(\eta,\lambda)}_{n,k}\geq \left(\frac{\pi^2}{f_{n}^{(\eta,\lambda)}}\right)^{1/2}, \quad k=1,\ldots,n-1.
\end{equation*}
	\item[\textup{\textbf{III}.}] If $\lambda=1$, $\eta\in\R$ and $n\geq 4$, then
	\begin{equation*}
\theta^{(\eta,\lambda)}_{n,k+1}-\theta^{(\eta,\lambda)}_{n,k}> \left(\frac{\pi^2}{g_{n}^{(\eta)}}\right)^{1/2}, \quad k=1,\ldots,n-1.
\end{equation*}
\end{itemize}
\end{corollary}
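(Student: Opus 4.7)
The plan is to apply the Sturm Comparison Theorem (Theorem~\ref{SturmTheoClassic}) to the Sturm--Liouville form of \eqref{RomonoviskyEDO} already derived in the proof of Theorem~\ref{Thm-convexityR2Zeros}, namely
$$v_n''(\theta) + \Lambda_n(\theta)\, v_n(\theta) = 0, \qquad \theta \in (-\pi/2, \pi/2),$$
whose zeros are precisely $\theta_{n,k}^{(\eta,\lambda)} = \arctan\bigl(x_{n,k}^{(\eta,\lambda)}\bigr)$. In each case the recipe is the same: bound $\Lambda_n$ by a suitable constant $K$ on an open interval containing the relevant pair of consecutive zeros, and compare $v_n$ with a solution of $Y''(\theta) + K\, Y(\theta) = 0$, whose consecutive zeros are spaced exactly $\pi/\sqrt{K}$ apart. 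The direction of the inequality on $\Lambda_n$ dictates whether the outcome is an upper or a lower bound on $\theta_{n,k+1}^{(\eta,\lambda)} - \theta_{n,k}^{(\eta,\lambda)}$.

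For cases \textup{\textbf{I}} and \textup{\textbf{II}} the first step is to view $\Lambda_n(\theta)$ as a quadratic in $\tan\theta$ and complete the square, showing that its global extremum over $(-\pi/2,\pi/2)$ equals exactly $f_n^{(\eta,\lambda)}$. When $0 < \lambda < 1$, the coefficient of $\tan^2\theta$ is positive and $f_n^{(\eta,\lambda)}$ is the global minimum; the hypothesis $\eta^2 \leq \lambda(1-\lambda)$ is exactly what ensures $f_n^{(\eta,\lambda)} \geq 0$ so that $\pi/\sqrt{f_n^{(\eta,\lambda)}}$ makes sense. Applying Sturm with the constant-coefficient equation as the minorant forces any sub-interval of length $\pi/\sqrt{f_n^{(\eta,\lambda)}}$ inside $(\theta_{n,k}^{(\eta,\lambda)},\theta_{n,k+1}^{(\eta,\lambda)})$ to contain a zero of $v_n$, giving $\theta_{n,k+1}^{(\eta,\lambda)} - \theta_{n,k}^{(\eta,\lambda)} \leq \pi/\sqrt{f_n^{(\eta,\lambda)}}$. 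When $\lambda > 1$, the coefficient of $\tan^2\theta$ is negative, $f_n^{(\eta,\lambda)}$ is now the global maximum, and the reverse Sturm comparison produces the lower bound $\theta_{n,k+1}^{(\eta,\lambda)} - \theta_{n,k}^{(\eta,\lambda)} \geq \pi/\sqrt{f_n^{(\eta,\lambda)}}$.

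For case \textup{\textbf{III}} with $\lambda = 1$ the $\tan^2\theta$-term vanishes and $\Lambda_n(\theta) = 2\eta(n+1)\tan\theta + (n+1)^2 - \eta^2$ becomes monotone in $\theta$, so its maximum on the closed interval $[\theta_{n,1}^{(\eta,\lambda)}, \theta_{n,n}^{(\eta,\lambda)}]$ is attained at $\theta_{n,n}^{(\eta,\lambda)}$ if $\eta>0$ and at $\theta_{n,1}^{(\eta,\lambda)}$ if $\eta<0$. Substituting the extreme bounds from Theorem~\ref{Thm-bounds} (specialized to $\lambda=1$, which is why $n \geq 4$ is required) for $x_{n,n}^{(\eta,\lambda)}$ or $x_{n,1}^{(\eta,\lambda)}$ and simplifying, one identifies the resulting upper bound as $g_n^{(\eta)}$. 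Because the bounds in Theorem~\ref{Thm-bounds} are strict, we in fact obtain $\Lambda_n < g_n^{(\eta)}$ on a slightly enlarged open interval still containing every pair $(\theta_{n,k}^{(\eta,\lambda)},\theta_{n,k+1}^{(\eta,\lambda)})$, and the Sturm argument from case \textup{\textbf{II}} then delivers the strict lower bound $\theta_{n,k+1}^{(\eta,\lambda)} - \theta_{n,k}^{(\eta,\lambda)} > \pi/\sqrt{g_n^{(\eta)}}$.

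The main obstacle I anticipate is the purely algebraic verification in case \textup{\textbf{III}} that plugging the bounds of Theorem~\ref{Thm-bounds} into $\Lambda_n$ simplifies precisely to $g_n^{(\eta)}$; in particular, one needs to recognize $\Delta_n|_{\lambda=1}$ as $\eta^2(n+1)^2 + 3(\eta^2 + n^2)$, to rewrite $2(n+1)(n^2+2)-3n$ as $n(n+1)^2 + n^3 + 4$, and to unify the $\eta>0$ and $\eta<0$ cases via the factor $\textup{sign}(\eta)$ built into the statement (for which the symmetry $(-1)^n \PP_n(\overline{b};-x)=\PP_n(b;x)$ used already in Theorem~\ref{Thm-convexityR2Zeros} provides a clean bookkeeping device).
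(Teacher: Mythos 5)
Your proposal is correct and follows essentially the same route as the paper: both pass to the Sturm--Liouville form $v_n''+\Lambda_n v_n=0$, use the extremum $f_n^{(\eta,\lambda)}$ of $\Lambda_n$ viewed as a quadratic in $\tan\theta$ for cases \textbf{I} and \textbf{II}, and in case \textbf{III} bound the now-linear $\Lambda_n$ on $(\arctan x_1,\arctan x_2)$ using the extreme-zero bounds of Theorem \ref{Thm-bounds} to arrive at $g_n^{(\eta)}$. Your algebraic identifications ($\Delta_n|_{\lambda=1}=\eta^2(n+1)^2+3(\eta^2+n^2)$ and $2(n+1)(n^2+2)-3n=n(n+1)^2+n^3+4$) are exactly the simplifications implicit in the paper's computation.
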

\begin{proof} Notice that $\Delta_{n}(\theta)$, given in \eqref{RomonoviskyEDO S-L form solution with theta}, has a single critical point for $\lambda\neq 1$ at $\theta=\textup{arctan}(\mathfrak{M}_{n})$, where $\mathfrak{M}_{n}$ was defined in Theorem \ref{Thm-convexityR2Zeros}. Under the condition \textup{\textbf{I}}, we have
$\Delta_{n}(\theta)\geq f_{n}^{(\eta,\lambda)}$, and since $y(\theta)=\sin[(f_{n}^{(\eta,\lambda)})^{1/2}\,(\theta-\theta_{n,k}^{(\eta,\lambda)})]$ is the solution of the differential equation $y''(\theta)+f_{n}^{(\eta,\lambda)}y(\theta)=0$, the result is a consequence of Theorem  \ref{SturmTheoClassic}. The condition \textup{\textbf{II}} follows analogously.

In view of \textup{\textbf{III}}, we have $\Delta_{n}(\theta)=2\eta(n+1)\tan(\theta)+(n+1)^{2}-\eta^{2}$. By Theorem \ref{Thm-bounds} the zeros of $\PP_{n}(b;x)$, for $n\geq 4$, are in the interval $(x_{1},x_{2})$ where
\begin{equation*}
 x_{1,2}=\frac{\eta(n^{2}+2)\pm(n-1)\sqrt{\eta^{2}(n^{2}+2n+4)+3n^{2}}}{3n}.
\end{equation*}
Therefore,  $\textup{arctan}(x_{1})\leq\theta\leq\textup{arctan}(x_{2})$ implies that $\Delta_{n}(\theta)<2\eta(n+1)x_{2}+(n+1)^{2}-\eta^{2}$ for $\eta\geq 0$ and
 $\Delta_{n}(\theta)<2\eta(n+1)x_{1}+(n+1)^{2}-\eta^{2}$ for $\eta<0$, and thus $\Delta_{n}(\theta)<g_{n}^{(\eta)}$.
\end{proof}

The next theorem was inspired by the construction in Moak et al. \cite{Saff} used to obtain information about the density of the zeros of Jacobi polynomials.

\begin{theorem}\label{densityR2Zeros}
Let $\mathcal{X}^{(b)}$ be the set of all the zeros of $\PP_{n}(b;x)$ for every $n \geq 1$. If $\lambda \geq 1$, then $\mathcal{X}^{(b)}$ is dense in $\mathbb{R}$.
\end{theorem}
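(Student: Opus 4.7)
The plan is to work with the Sturm--Liouville form of the differential equation satisfied by $\PP_n(b;x)$, which was already derived in the proof of Theorem~\ref{Thm-convexityR2Zeros}, and to close the argument by comparison with a high-frequency sine via the Sturm Comparison Theorem~\ref{SturmTheoClassic}. Intuitively, on compact subintervals of the tangent variable, the coefficient $\Lambda_n$ is forced to $+\infty$ at rate $n^2$, so the zero spacing of $v_n$ must tend to $0$ there.

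First I would reduce density on $\R$ to density on the bounded interval $(-\pi/2,\pi/2)$. Under the substitution $x=\tan\theta$, the zeros $x_{n,k}^{(\eta,\lambda)}$ of $\PP_n(b;x)$ correspond bijectively to zeros $\theta_{n,k}^{(\eta,\lambda)}\in(-\pi/2,\pi/2)$ of $v_n(\theta)$, which satisfies $v_n''+\Lambda_n v_n=0$ with $\Lambda_n$ given by \eqref{RomonoviskyEDO S-L form solution with theta}. Since $\tan:(-\pi/2,\pi/2)\to\R$ is a homeomorphism, it suffices to prove that $\bigcup_{n\geq 1}\{\theta_{n,k}^{(\eta,\lambda)}:1\leq k\leq n\}$ is dense in $(-\pi/2,\pi/2)$.

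Second, I would show that $\Lambda_n\to+\infty$ uniformly on each compact subinterval of $(-\pi/2,\pi/2)$. Fix $[a,b]\subset(-\pi/2,\pi/2)$ and set $T=\max_{[a,b]}|\tan\theta|<\infty$. Because $\lambda\geq 1$, the coefficient $\lambda(1-\lambda)$ is non-positive, so $\lambda(1-\lambda)\tan^2\theta\geq -\lambda(\lambda-1)T^2$ on $[a,b]$, and consequently
\[
\Lambda_n(\theta)\geq n^2+2n\lambda+\lambda-\eta^2-2|\eta|(n+\lambda)T-\lambda(\lambda-1)T^2.
\]
The right-hand side grows like $n^2$, so for each $M>0$ there exists $N=N(M,a,b)$ such that $\Lambda_n(\theta)>M^2$ for every $\theta\in[a,b]$ and every $n\geq N$. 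Then I would invoke Theorem~\ref{SturmTheoClassic} to produce zeros of $v_n$ in an arbitrary non-empty open subinterval $J\subset(-\pi/2,\pi/2)$: pick $[a,b]\subset J$ of length $\ell>0$, fix $M>\pi/\ell$, and choose $n\geq N$ so that $\Lambda_n>M^2$ throughout a small open neighborhood $(r,s)\subset J$ of $[a,b]$. The function $y(\theta)=\sin(M(\theta-a))$ is a nontrivial solution of $y''+M^2 y=0$ whose consecutive zeros $a$ and $a+\pi/M$ both lie in $(r,s)$; taking $f\equiv M^2$ and $F\equiv\Lambda_n$ (so $f<F$, hence $f\not\equiv F$, on $(r,s)$), Theorem~\ref{SturmTheoClassic} produces a zero of $v_n$ in $(a,a+\pi/M)\subset J$. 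Mapping back through $\tan$ yields a zero of $\PP_n(b;\cdot)$ inside $\tan(J)$, which proves density.

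The main obstacle is conceptually mild but technically essential: the hypothesis $\lambda\geq 1$ is precisely what allows the lower bound $\lambda(1-\lambda)\tan^2\theta\geq -\lambda(\lambda-1)T^2$ on $[a,b]$, making the $n^2$ growth of $\Lambda_n$ dominate uniformly on compact subintervals of $(-\pi/2,\pi/2)$. Without this sign control on $\lambda(1-\lambda)$, the $\tan^2\theta$ contribution would need to be handled more carefully. The only other subtlety is to ensure the strict-inequality condition $f\not\equiv F$ required by Theorem~\ref{SturmTheoClassic}, and this is automatic from the strict bound $M^2<\min_{[a,b]}\Lambda_n$ arranged above.
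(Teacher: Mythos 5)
Your proposal is correct and follows essentially the same route as the paper: pass to the Sturm--Liouville form $v_n''+\Lambda_n v_n=0$ in the variable $\theta=\arctan x$, observe that $\lambda\geq 1$ forces $\Lambda_n\to\infty$ on compact subintervals of $(-\pi/2,\pi/2)$, and apply the Sturm Comparison Theorem against a sine of large fixed frequency to place zeros of $v_n$ in any prescribed subinterval. Your uniform lower bound for $\Lambda_n$ on compacts is a minor streamlining that lets you avoid the paper's separate treatment of $\lambda>1$ versus $\lambda=1$ and of the sign of $\eta$, but the underlying argument is the same.
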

\begin{proof}

Let $\Theta_{t_1}$ and $\Theta_{t_2}$ be arbitrary numbers in $(-\pi/2, \pi/2)$ such that
$$
\tan(\Theta_{t_1}) = t_{1}, \ \ \tan(\Theta_{t_2}) = t_2 \ \ \mbox{and} \ \ \Theta_M = \max{(|\Theta_{t_1}|,|\Theta_{t_2}|)}.
$$
We first show that for a given $\epsilon > 0$, there exists $n _1$ such that all consecutive zeros $\theta_{n_1,k}^{(\eta,\lambda)}$ and $\theta_{n_1,k+1}^{(\eta,\lambda)}$ of $v_{n_1}$  within $(\Theta_{t_1},\Theta_{t_2})$ are such that $\theta_{n_1,k+1}^{(\eta,\lambda)} -\theta_{n_1,k}^{(\eta,\lambda)} < \epsilon$.
If  $\lambda>1$ and $(n^{2}+2n\lambda+\lambda-n-\eta^{2}) > 0$, consider the two roots $\Theta_{1}^{(n)}$ and $ \Theta_2^{(n)}$ of $\Lambda_{n}-n$ given by
\[
\tan(\Theta_{1,2}^{(n)}) =\frac{\eta(n+\lambda)\pm\sqrt{\eta^{2}(n+\lambda)^{2}+\lambda(\lambda-1)(n^{2}+2n\lambda+  \lambda-n-\eta^{2})}}{\lambda(\lambda-1)},
\]
where $\tan(\Theta_{2}^{(n)}) > \tan(\Theta_1^{(n)})$ and $\Lambda_{n}(\theta)$ is defined in \eqref{RomonoviskyEDO S-L form solution with theta}. Since $\lambda>1$, we have $\Lambda_{n}(\theta)-n>0$ for $\theta\in(\Theta_{1}^{(n)},\Theta_{2}^{(n)})$. Without much difficulty, we can also verify that $\Theta_{1}^{(n)}$ tends to $-\pi/2$ and $\Theta_{2}^{(n)}$ tends to $\pi/2$ when $n$ tends to $\infty$. Let $n_0$ be such that for all $n \geq n_0$, $\Theta_1^{(n)} < \Theta_{t_1}$  and $\Theta_2^{(n)} > \Theta_{t_2}$.
Now we note that the roots of $\sin(\sqrt{n}\theta)$, which is the solution of the differential equation $Y^{\prime\prime}+nY=0$, are dense in the real line as $n$ tends to $\infty$. Therefore, for a given $\epsilon > 0$, let $n_1$ be such that $n_1 > n_0$ and $2\pi/ \sqrt{n_1} < \epsilon$.
By Theorem \ref{SturmTheoClassic} it follows that $v_{n_1}(\theta)$ has a zero between any two consecutive zeros of $\sin(\sqrt{n_1}\theta)$ in $(\Theta_{t_1} , \Theta_{t_2} )$. Thus, we conclude that there are infinitely many zeros of $v_{n}(\theta)$ in $(\Theta_{t_1} , \Theta_{t_2} )$ as $n$ tends to $\infty$, and that if $\theta_{n_1,k}^{(\eta,\lambda)}$ and $\theta_{n_1,k+1}^{(\eta,\lambda)}$ are in $(\Theta_{t_1} , \Theta_{t_2} )$, then $\theta_{n_1,k+1}^{(\eta,\lambda)}- \theta_{n_1,k}^{(\eta,\lambda)} < \epsilon$. Hence, if $x_{n_1,k+1}^{(\eta,\lambda)}$ and $ x_{n_1,k}^{(\eta,\lambda)} $ are in $(t_1, t_2)$, then from $\tan(A-B) = [\tan(A) - \tan(B)]/[1+ \tan(A)\tan(B)]$,
\begin{align*}
x_{n_1,k+1}^{(\eta,\lambda)}- x_{n_1,k}^{(\eta,\lambda)}  &= \left[1+\tan(\theta_{n_1,k+1}^{(\eta,\lambda)})\tan(\theta_{n_1,k}^{(\eta,\lambda)})\right]\tan(\theta_{n_1,k+1}^{(\eta,\lambda)}-\theta_{n_1,k}^{(\eta,\lambda)})\\
&< \left[1+\tan^2(\Theta_M)\right] \tan(\epsilon).
\end{align*}

Now let $\lambda=1$. Then the equation $\Lambda_{n}(\theta)-n=0$ has the root
\[
\tan(\Theta_{1}^{(n)})=-\frac{n^{2}+n+1-\eta^{2}}{2\eta(n+1)}.
\]
Note that when $\eta > 0$, it follows that $\Lambda_{n}(\theta)-n > 0$ for $\theta \in(\Theta_{1}^{(n)}, \pi/2)$ and that $\Theta_{1}^{(n)}$  tends to $-\pi/2$ as $n$ tends to $\infty$. By using the same argument as before, we conclude that $\mathcal{X}^{(b)}$ is dense in $\mathbb{R}$ for $\eta>0$.
The case $\eta<0$ follows from the identity $(-1)^{n}\PP_{n}(\overline{b};-x)=\PP_{n}(b;x)$, with $b=\lambda+i\eta$. For $\eta=0$, the solution of Equation  \eqref{RomonoviskyEDO S-L form with theta} is a linear combination of $\cos((n+1)\theta)$ and $\sin((n+1)\theta)$, whose zeros are dense in $\mathbb{R}$ as $n$ tends to $\infty$.
This concludes the proof of the theorem.
\end{proof}

We now consider the polynomials $\{\PP_{n}^{(k)}(b;x)\}_{n \geq 0}$, the $k$-th associated CRR polynomials,  obtained from
\begin{equation}\label{3termR2-k}
\PP_{n+1}^{(k)}(b;x)=(x-c_{k+n+1}^{(b)})\PP_{n}^{(k)}(b;x)-d_{k+n+1}^{(\lambda)}(x^{2}+1)\PP_{n-1}^{(k)} (b;x),  \quad  n \geq 1,
\end{equation}
where $\PP_{0}^{(k)}(b;x)=1$,  $\PP_{1}^{(k)}(b;x)=x-c_{k+1}^{(b)}$. The coefficients $c_{n}^{(b)}$ and $d_{n+1}^{(\lambda)}$ are as in  \eqref{RomonovskyCoeff}. Clearly, $\PP_{n}^{(0)}(b;x) = \PP_{n}(b;x)$, $n \geq 1$.

Then Theorem \ref{densityR2Zeros} has the corollary

\begin{corollary}\label{Cor-densityR2Zeros}
For any fixed nonnegative integer $k$, let $\mathcal{X}^{(b)}_{k}$ be the set of all the zeros of $\PP_{n}^{(k)}(b;x)$, for every $n \geq 1$. If $\lambda \geq 1$, then $\mathcal{X}^{(b)}_{k}$ is dense in $\mathbb{R}$.
\end{corollary}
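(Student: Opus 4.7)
The plan is to transfer the density result of Theorem~\ref{densityR2Zeros} from the CRR polynomials to their $k$-th associates by means of a Cauchy-type interlacing of generalized eigenvalues. Comparing \eqref{3termR2-k} with the tridiagonal matrices in \eqref{GenEigeProb}, one sees that $\PP_{n}^{(k)}(b;x)$ is (up to a normalizing constant) the characteristic polynomial of the generalized eigenvalue problem for the trailing $n\times n$ principal submatrix pair obtained by deleting the first $k$ rows and columns of the Hermitian pair $(\mathbf{A}_{n+k}^{(b)},\mathbf{B}_{n+k}^{(b)})$. Since Theorem~\ref{densityR2Zeros} already furnishes density of the spectrum of the full pair, the task reduces to relating the eigenvalues of the submatrix pencil to those of the full pencil.

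The first technical step I would carry out is to verify that $\mathbf{B}_{n+k}^{(b)}$ is positive definite when $\lambda\geq 1$. A direct calculation gives $d_{j}^{(\lambda)}\leq 1/4$ for every $j\geq 2$, with equality precisely when $\lambda=1$. The leading principal minors $\beta_{j}=\det\mathbf{B}_{j}^{(b)}$ satisfy the recurrence $\beta_{j}=\beta_{j-1}-d_{j}^{(\lambda)}\beta_{j-2}$ with $\beta_{0}=\beta_{1}=1$; for $\lambda=1$ this is solved in closed form as $\beta_{j}=(j+1)/2^{j}>0$, while for $\lambda>1$ an induction using the strict inequality $d_{j}^{(\lambda)}<1/4$ preserves $\beta_{j}>0$. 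With positive definiteness in hand, the classical Cauchy interlacing theorem for Hermitian pencils with positive definite $\mathbf{B}$, applied $k$ consecutive times, produces, for $\xi_{1}<\cdots<\xi_{n+k}$ the zeros of $\PP_{n+k}(b;x)$ and $\mu_{1}^{(n,k)}<\cdots<\mu_{n}^{(n,k)}$ the zeros of $\PP_{n}^{(k)}(b;x)$, the bracketing
$$
\xi_{i}\leq\mu_{i}^{(n,k)}\leq\xi_{i+k},\qquad i=1,\ldots,n.
$$

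To close the argument I would revisit the proof of Theorem~\ref{densityR2Zeros} and extract a conclusion slightly stronger than density itself: the Sturm comparison with $Y''+mY=0$ performed there, whose zeros lie at spacing $\pi/\sqrt{m}$ inside the expanding intervals $(\Theta_{1}^{(m)},\Theta_{2}^{(m)})$ that exhaust $(-\pi/2,\pi/2)$, in fact forces the number of zeros of $\PP_{m}(b;x)$ in any fixed bounded open interval $(t_{1},t_{2})\subset\mathbb{R}$ to tend to infinity as $m\to\infty$. Fixing $k$ and $(t_{1},t_{2})$, I would then choose $m=n+k$ large enough that $\PP_{m}(b;x)$ has at least $k+1$ consecutive zeros $\xi_{i},\xi_{i+1},\ldots,\xi_{i+k}$ inside $(t_{1},t_{2})$; the interlacing above forces $\mu_{i}^{(n,k)}\in[\xi_{i},\xi_{i+k}]\subset(t_{1},t_{2})$, and hence $\mathcal{X}^{(b)}_{k}$ is dense in $\mathbb{R}$. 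The step I expect to be the most delicate is the positive-definiteness check at the borderline $\lambda=1$, where the chain sequence value $d_{j}=1/4$ is extremal and the minors $\beta_{j}$ decay like $(j+1)/2^{j}$: positivity is present but tight, so one must rely on the closed-form recurrence solution rather than a coarse estimate.
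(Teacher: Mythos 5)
Your argument is correct, and it arrives at the same key structural fact as the paper --- each zero $\mu_i^{(n,k)}$ of $\PP_{n}^{(k)}(b;x)$ is bracketed between the zeros $\xi_i$ and $\xi_{i+k}$ of $\PP_{n+k}(b;x)$ --- but by a genuinely different mechanism. The paper derives directly from the recurrence \eqref{3termR2-k} the identity $\PP_{n}^{(k+1)}(b;x)\PP_{n}^{(k)}(b;x) - \PP_{n-1}^{(k+1)}(b;x)\PP_{n+1}^{(k)}(b;x) = (1+x^2)^{n}\prod_{j=2}^{n+1}d_{k+j}^{(\lambda)}>0$, which forces the zeros of $\PP_{n}^{(k+1)}$ and $\PP_{n+1}^{(k)}$ to interlace, and then iterates down to $k=0$; you instead pass through the pencil interpretation \eqref{GenEigeProb} and Cauchy interlacing for Hermitian pencils with positive definite $\mathbf{B}$. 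The trade-off is twofold. First, the paper's identity (and hence its interlacing) holds for every $\lambda>0$ because $\{d_{k+n}^{(\lambda)}\}_{n\geq 2}$ is always a positive chain sequence, whereas your route needs $\mathbf{B}_{n+k}^{(b)}\succ 0$, which your minor computation establishes only for $\lambda\geq 1$ (for $0<\lambda<1$ one has $d_{j}^{(\lambda)}>1/4$); since the corollary assumes $\lambda\geq 1$ this costs nothing here, but the recurrence identity is the more robust tool. (A small repair to your induction for $\lambda>1$: positivity of $\beta_{j-1},\beta_{j-2}$ together with $d_j<1/4$ does not by itself give $\beta_j>0$; carry the ratio bound $\beta_j/\beta_{j-1}\geq 1/2$ as the inductive hypothesis, which closes uniformly for all $\lambda\geq 1$ and makes the separate closed-form treatment of $\lambda=1$ unnecessary.) Second, you make explicit something the paper compresses into ``the corollary follows via induction'': density of the union $\mathcal{X}^{(b)}$ alone is not sufficient, and one must extract from the proof of Theorem \ref{densityR2Zeros} the quantitative statement that a single $\PP_{m}(b;x)$ has arbitrarily many zeros in any fixed interval for $m$ large, so that $k+1$ consecutive zeros of $\PP_{n+k}(b;x)$ land in the target interval; spelling this out, and noting that $i+k\leq n+k$ automatically forces $i\leq n$ so the bracketing always applies, is a genuine gain in rigor over the paper's terse conclusion.
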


\begin{proof}
Consider the positive chain sequence $\{d_{n}^{(\lambda)}\}_{n \geq 2}$ (see  \eqref{Eq-MinMaxPSeq-1} and \eqref{Eq-MinMaxPSeq-2} below for more details). It follows that for any $k \geq 1$ the sequence $\{d_{k+n}^{(\lambda)}\}_{n \geq 2}$  is also a positive chain sequence (details in Chihara \cite{Chihara-book1978}). Consequently,  in the same way as shown in Ismail and Ranga \cite{Ismail-Ranga}, it follows that the zeros of $\PP_{n}^{(k)}(b;x)$ are all real, simple and also that  the zeros of $\PP_{n-1}^{(k)}(b;x)$ and $\PP_{n}^{(k)}(b;x)$ interlace.

From the three term recurrence in Equation \eqref{3termR2-k} it is also straightforward to show that
\[
\PP_{n}^{(k+1)}(b;x)\PP_{n}^{(k)}(b;x) - \PP_{n-1}^{(k+1)}(b;x)\PP_{n+1}^{(k)}(b;x) = (1+x^2)^{n} \prod_{j=2}^{n+1}d_{k+j}^{(\lambda)} \ > \ 0,
\]
for all real $x$ and $n \geq 1$. Thus, the zeros of $\PP_{n}^{(k+1)}(b;x)$ and $\PP_{n+1}^{(k)}(b;x)$ also interlace.

Hence, the corollary follows via induction.
\end{proof}

We now provide some information regarding the orthogonality properties of the sequence of polynomials $\{\PP_{n}^{(k)}(b;x)\}_{n\geq 0}$. These orthogonality properties are very much related to what it is known about the positive chain sequence $\{d_{k+n}^{(\lambda)}\}_{n \geq 2}$, and the Verblunsky coefficients associated with orthogonal polynomials on the unit circle. For more information about Verblunsky coefficients and  orthogonal polynomials on the unit circle we cite, for example, Ismail  \cite{Ismail-Book2005} and Simon \cite{Simon-Book-p1}.

The sequence $\{d_{n}^{(\lambda)}\}_{n \geq 2}$ is one of the nicest example of a positive chain sequence, where one can state its minimal $\{m_{n}^{(\lambda)}\}_{n \geq 1}$ and maximal $\{M_{n}^{(\lambda)}\}_{n \geq 1} = \{M_{n}^{(\lambda;0)}\}_{n \geq 1}$ parameter sequences explicitly as follows:
\begin{equation} \label{Eq-MinMaxPSeq-1}
  \begin{array}{l}
     \dsp m_{n}^{(\lambda)} = M_{n}^{(\lambda;0)} = \frac{1}{2}\frac{n-1}{\lambda+n-1}, \quad n \geq 1,
  \end{array}
\end{equation}
if $0 < \lambda \leq 1/2$ and
\begin{equation} \label{Eq-MinMaxPSeq-2}
  \begin{array}{l}
     \dsp m_{n}^{(\lambda)} = \frac{1}{2}\frac{n-1}{\lambda+n-1} \quad \mbox{and} \quad  M_{n}^{(\lambda;0)} = \frac{1}{2}\frac{2\lambda+n-2}{\lambda+n-1},  \quad n \geq 1,
  \end{array}
\end{equation}
if $\lambda > 1/2$.

Hence, for any $k \geq 1$, there follows (see Chihara \cite{Chihara-book1978}, page 94)  that the  maximal $\{M_{n}^{(\lambda;k)}\}_{n \geq 1}$ parameter sequences of the positive chain sequence $\{d_{k+n}^{(\lambda)}\}_{n \geq 2}$ satisfy
\[
  \begin{array}{l}
    \dsp  M_{n}^{(\lambda;k)}  = \frac{1}{2}\frac{k+n-1}{\lambda+k+n-1}, \quad n \geq 1, \quad \mbox{if} \quad 0 < \lambda \leq 1/2,  \\[3ex]
    \dsp  M_{n}^{(\lambda;k)}  = \frac{1}{2}\frac{2\lambda+k+n-2}{\lambda+k+n-1}, \quad n \geq 1, \quad \mbox{if} \quad \lambda > 1/2.
  \end{array}
\]
Thus, we can state the following:

\begin{theorem}
   Assume that either  $k = 0$ and $\lambda > 1/2$ or  $k \geq 1$ and $\lambda > 0$. Let $\nu^{(b;k)}$ be the probability measure on the unit circle such that its Verblunsky coefficients are
\[
   \beta_{n-1}^{(b;k)} = \frac{1}{\tau_{n-1}^{(b;k)}} \frac{1 - M_{n}^{(\lambda;k)} - i c_{k+n}^{(b)}}{1-ic_{k+n}^{(b)}}, \quad n \geq 1,	
\]
where $\tau_{0}^{(b;k)} = 1$ and $\tau_{n}^{(b;k)} = \tau_{n-1}^{(b;k)} (1 - i c_{k+n}^{(b)})/(1 + i c_{k+n}^{(b)})$.

If $d\varphi^{(b;k)}(x) = d\nu^{(b;k)}((x+i)/(x-i))$ then
\[
    \int_{-\infty}^{\infty} x^{j} \frac{\PP_{n}^{(k)}(b;x)}{(x^2+1)^n} d\varphi^{(b;k)}(x) = \gamma_{n}^{(b;k)}\, \delta_{n,j}, \quad j = 0,1, \ldots, n,
\]
where $\gamma_{0}^{(b;k)} = 1$ and $\gamma_{n}^{(b;k)} = (1-M_{n}^{(\lambda;k)}) \gamma_{n-1}^{(b;k)}$,  $n \geq 1$. Moreover,
\begin{equation}\label{associated}
    \PP_{n}^{(k+1)}(b;x) = \int_{-\infty}^{\infty}\frac{(1+x^{2})^{n}\PP_{n}^{(k)}(t)-(1+t^{2})^{n}\PP_{n}^{(k)}(x)}{M_{1}^{(\lambda;k)}(t-x)(1+t^{2})^{n}}d\varphi^{(b;k)}(t).
\end{equation}
\end{theorem}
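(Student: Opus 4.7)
The plan is to first construct the probability measure $\nu^{(b;k)}$ via Verblunsky's theorem, then to pull it back to the real line through the Cayley-type change of variables $z=(x+i)/(x-i)$, and finally to identify the resulting structure as the correspondence between para-orthogonal polynomials on the unit circle and $R_{II}$-type polynomials developed in Ismail--Ranga \cite{Ismail-Ranga} and Section 2 of Mart\'inez-Finkelshtein et al. \cite{MRRT-CRR-2019}. Formula \eqref{associated} will then be the integral representation of the associated polynomial of the second kind adapted to the $R_{II}$ setting.

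To apply Verblunsky's theorem I need $|\beta_{n-1}^{(b;k)}|<1$. A short induction on the recursive definition yields $|\tau_{n-1}^{(b;k)}|=1$; moreover, the explicit formulas \eqref{Eq-MinMaxPSeq-1}--\eqref{Eq-MinMaxPSeq-2} give $0<M_{n}^{(\lambda;k)}<1$ under the stated hypotheses on $k$ and $\lambda$. A direct calculation then shows
\[
|\beta_{n-1}^{(b;k)}|^{2} \;=\; \frac{\bigl(1-M_{n}^{(\lambda;k)}\bigr)^{2}+(c_{k+n}^{(b)})^{2}}{1+(c_{k+n}^{(b)})^{2}} \;<\; 1,
\]
and Verblunsky's theorem produces a unique probability measure $\nu^{(b;k)}$ on the unit circle whose Verblunsky coefficients are the prescribed $\beta_{n-1}^{(b;k)}$.

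Next, set $d\varphi^{(b;k)}(x) = d\nu^{(b;k)}((x+i)/(x-i))$. The para-orthogonal polynomials on the unit circle attached to $\nu^{(b;k)}$, together with the maximal parameter sequence $\{M_n^{(\lambda;k)}\}$, pull back through the Cayley transform to a family of monic real polynomials satisfying an $R_{II}$-type three-term recurrence. Using the chain-sequence identity $d_{k+n+1}^{(\lambda)} = M_{n+1}^{(\lambda;k)}\bigl(1-M_{n}^{(\lambda;k)}\bigr)$ and the explicit form of $\beta_{n-1}^{(b;k)}$, a term-by-term match shows that this recurrence is exactly \eqref{3termR2-k}, so the pulled-back polynomials are precisely $\PP_n^{(k)}(b;x)$. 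The orthogonality of $\PP_n^{(k)}(b;x)/(x^2+1)^n$ against $\{x^j\}_{j=0}^{n-1}$ then follows from the general $R_{II}$-orthogonality theory in \cite{Ismail-Ranga}, and the constants $\gamma_n^{(b;k)}$ satisfy the stated one-step recursion after inserting \eqref{3termR2-k} into $\int \PP_n^{(k)}(b;x)^{2}/(x^2+1)^n\,d\varphi^{(b;k)}$ and using the orthogonality of $\PP_{n-1}^{(k)}$.

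Finally, to prove \eqref{associated}, denote its right-hand side by $R_n(x)$. I would first observe that the numerator inside the integrand vanishes at $t=x$, so $(t-x)$ cancels, producing a rational expression in $t$ whose $\varphi^{(b;k)}$-integral is controlled by the orthogonality of the $\PP_j^{(k)}$ established above. A degree count combined with the leading behavior as $x\to\infty$ shows that $R_n$ is a monic polynomial of degree $n$, with the factor $M_1^{(\lambda;k)}$ being precisely the normalization that forces monicity. I would then verify that $R_n$ satisfies the same $R_{II}$ recurrence \eqref{3termR2-k} as $\PP_n^{(k+1)}(b;x)$, with initial data $R_0 = 1$ and $R_1(x) = x - c_{k+2}^{(b)}$, yielding the identity. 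The main obstacle I expect is the algebraic bookkeeping in the third paragraph, matching the recurrence coefficients induced by the Cayley transform to the explicit values in \eqref{RomonovskyCoeff}; the last step is then largely a mechanical computation, though tracking the interplay between the $(1+t^2)^n$ factor and $M_1^{(\lambda;k)}$ needs some care.
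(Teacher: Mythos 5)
Your proposal is correct and takes essentially the same route as the paper: the paper's proof consists of exactly two citations --- Theorem 1.2 of Ismail and Ranga \cite{Ismail-Ranga} for the orthogonality relation and Section 3 of Bracciali et al.\ \cite{BraccPereiRanga-NA2019} for the formula \eqref{associated} --- and your outline is a faithful unpacking of precisely what those references supply (the Verblunsky/Cayley correspondence between the unit-circle measure and the $R_{II}$ recurrence, plus the numerator-polynomial construction). The only content you add beyond the paper is the explicit check that $|\beta_{n-1}^{(b;k)}|<1$ under the stated hypotheses on $k$ and $\lambda$, which is a worthwhile detail the paper leaves implicit.
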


\begin{proof}
The orthogonality relation follows from Theorem 1.2 of Ismail and Ranga \cite{Ismail-Ranga}. To obtain Formula \eqref{associated} we use results found in  Section 3 of Bracciali et al. \cite{BraccPereiRanga-NA2019}.
\end{proof}

In general, it is not possible to write down the exact expression for $\varphi^{(b;k)}$, with the exception
$$
    d\varphi^{(b;0)}(x) = \frac{2^{2\lambda-1} |\Gamma(b)|^2}{\Gamma(2\lambda-1)}   \frac{e^{\eta \pi}}{2\pi} \frac{(e^{-\cot^{-1} x})^{2\eta}}{(1+x^2)^{\lambda}} dx.
$$

\section{Asymptotics}  \label{main2}
\setcounter{equation}{0} \setcounter{remark}{0} \setcounter{theorem}{0}\setcounter{corollary}{0}\setcounter{lemma}{0}

Continuing our proposed program, we now study the asymptotics of CRR polynomial. Recall that the these polynomials are parametrized by $b=\lambda+i\eta$. We shall focus here on the asymptotic properties of $\PP_{n}(b;x)$ with respect to both parameters and their link to the Hermite and Laguerre polynomials. In order to show it, we first recall that the polynomials $H_{n}(x)$ and $L_n^{(\alpha)}(x)$ are solutions of the recurrence formulas
\[
   H_{n+1}(x) = 2x H_{n}(x) - 2n H_{n-1}(x), \quad n \geq 1,
\]
with $H_{0}(x) = 1$, $H_{1}(x) = 2x$, and
\[
  L_{n+1}^{(\alpha)}(x) = \left(2 + \frac{\alpha-1-x}{n+1}\right)L_{n}^{(\alpha)}(x)  - \left(1 + \frac{\alpha-1}{n+1}\right)L_{n-1}^{(\alpha)}(x), \quad n \geq 1,
\]
with $L_{0}^{(\alpha)}(x) = 1$, $L_{1}^{(\alpha)}(x) = \alpha+1-x$, respectively.  We also recall that $\{H_n (x)\}_{n \geq 0}$ is a sequence of orthogonal polynomials in $(-\infty, \infty)$ with respect to the Gaussian weight $e^{-x^2}$ and $\{L_n^{(\alpha)}(x)\}_{n \geq 0}$ is orthogonal in $(0, \infty)$ with respect to the weight function $x^{\alpha} e^{-x}$.

\begin{theorem}\label{Assintotico-eta-lambda} For each $n\geq 0$, let
\begin{equation*}
   \widehat{U}_n(b;x)=\frac{2^{n}(\lambda)_{n}}{\lambda^{n/2}} \PP_{n}(b;x/\sqrt{\lambda}) \quad  \mbox{and} \quad  \widetilde{U}_{n}(b;x)
   =\frac{(\lambda)_{n}}{\eta^n n!}x^{n}\PP_{n}(b;2\eta/x).
\end{equation*}
Then the following expansion formulas hold:
\begin{align*}
  \begin{split}
 \widehat{U}_n(b;x)& = H_{n}(x) - 2\eta n H_{n-1}(x) \frac{1}{\sqrt{\lambda}} + \mathcal{O}\big((1/\sqrt{\lambda})^2\big), \\
\widetilde{U}_n(b;x)&=L_{n}^{(2\lambda-1)}(x)+\mathcal{O}\big(1/\eta^2\big),
  \end{split}
\end{align*}
where by $\mathcal{O}\left(h^{k}\right)$ we mean powers of $h$ greater than or equal to $k$.

\end{theorem}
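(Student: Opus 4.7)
My plan is to translate the $R_{II}$-type three-term recurrence \eqref{3termR2TTT} satisfied by $\PP_n(b;x)$ into recurrences for the normalized objects $\widehat U_n$ and $\widetilde U_n$, and then expand those recurrences in the appropriate small parameter ($1/\sqrt\lambda$ or $1/\eta^2$) and match with the Hermite and Laguerre three-term recurrences stated right before the theorem. Since $\{H_n\}$ and $\{L_n^{(2\lambda-1)}\}$ are determined by their recurrences together with two initial values, all one needs is (i) the leading-order recurrence agrees, (ii) the initial values agree, and (iii) the next-order correction can be solved in closed form.

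For the first (Hermite) expansion, I would substitute $x \mapsto x/\sqrt\lambda$ into \eqref{3termR2TTT} and rescale by the factor $2^n(\lambda)_n/\lambda^{n/2}$. Using $(\lambda)_{n+1}=(\lambda+n)(\lambda)_n$ together with the explicit forms of $c_{n+1}^{(b)}$ and $d_{n+1}^{(\lambda)}$ in \eqref{RomonovskyCoeff}, the algebra collapses neatly and the recurrence becomes
\begin{equation*}
\widehat U_{n+1} \;=\; \Bigl(2x - \tfrac{2\eta}{\sqrt\lambda} + \tfrac{2nx}{\lambda}\Bigr)\widehat U_n \;-\; \tfrac{n(n+2\lambda-1)}{\lambda}\Bigl(1+\tfrac{x^2}{\lambda}\Bigr)\widehat U_{n-1}.
\end{equation*}
Setting $\epsilon=1/\sqrt\lambda$ and expanding $\widehat U_n = u_n^{(0)} + \epsilon\, u_n^{(1)} + \mathcal O(\epsilon^2)$, the $\epsilon^0$ equation is exactly the Hermite recurrence, and since $\widehat U_0=1,\widehat U_1=2x-2\eta\epsilon$, I get $u_n^{(0)}=H_n(x)$. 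The $\epsilon^1$ equation reads $u_{n+1}^{(1)} = 2x\,u_n^{(1)} - 2n\,u_{n-1}^{(1)} - 2\eta H_n(x)$ with $u_0^{(1)}=0$ and $u_1^{(1)}=-2\eta$; the ansatz $u_n^{(1)}=-2\eta n H_{n-1}(x)$ is verified by a one-line induction using the Hermite recurrence itself.

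For the Laguerre expansion, the calculation is cleaner. Substituting $x\mapsto 2\eta/x$ in \eqref{3termR2TTT}, multiplying through by $x^{n+1}$, and rescaling by $(\lambda)_n/(\eta^n n!)$ produces
\begin{equation*}
\widetilde U_{n+1} \;=\; \frac{2(\lambda+n)-x}{n+1}\,\widetilde U_n \;-\; \frac{n+2\lambda-1}{n+1}\Bigl(1+\frac{x^2}{4\eta^2}\Bigr)\widetilde U_{n-1}.
\end{equation*}
The $1/\eta^2$ correction sits isolated inside the last coefficient, so dropping it gives literally the Laguerre recurrence with $\alpha=2\lambda-1$. Direct computation shows $\widetilde U_0=1=L_0^{(2\lambda-1)}$ and $\widetilde U_1 = 2\lambda - x = L_1^{(2\lambda-1)}(x)$ (no $\eta$ appears), so both initializations match exactly and only even powers of $1/\eta$ can enter the expansion; hence the error is $\mathcal O(1/\eta^2)$ as claimed.

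The main obstacle is really just bookkeeping: carefully collecting the $\lambda$-factors coming from $(\lambda)_n/(\lambda)_{n-1}=\lambda+n-1$ together with those from $d_{n+1}^{(\lambda)}$ and $x^2+1$ to see that they combine into the exact Hermite/Laguerre coefficients at leading order. Once the rescaled recurrences above are obtained, the expansion is an elementary induction on $n$, the Hermite-side correction formula $u_n^{(1)}=-2\eta n H_{n-1}(x)$ being the only identity that requires a short verification.
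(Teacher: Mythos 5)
Your proposal is correct and follows essentially the same route as the paper: rewrite the $R_{II}$ recurrence for the rescaled polynomials (your two displayed recurrences are algebraically identical to the ones in the paper's proof), match the leading order with the Hermite and Laguerre recurrences, and verify the $1/\sqrt{\lambda}$ correction $-2\eta n H_{n-1}(x)$ by induction. The only cosmetic difference is that you separate the orders in $\epsilon=1/\sqrt{\lambda}$ before inducting, whereas the paper inducts directly on the full two-term expansion.
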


\begin{proof}
From Equation \eqref{3termR2TTT}, we first observe that the sequence of functions $\{\widehat{U}_n(b;x)\}_{n \geq 0}$ satisfies
\begin{equation}\label{recurrenceQ}
  \begin{array}{l}
    \widehat{U}_0(b;x)=H_{0}(x)=1, \quad \widehat{U}_1(b;x)=2x-\frac{2\eta}{\sqrt{\lambda}}\quad \mbox{and} \\[2ex]
\widehat{U}_{n+1}(b;x)=2[x-\alpha_{n+1}(x)]\widehat{U}_{n}(b;x)-[2n+\beta_{n+1}(x)]\widehat{U}_{n-1}(b;x),
  \end{array}
\end{equation}
for $n \geq 1$, where
\begin{equation*}
\alpha_{n+1}(x)=\frac{\eta}{\sqrt{\lambda}}-\frac{n}{\lambda}x \ \ \mbox{and} \ \ \beta_{n+1}(x)=\frac{n[2x^2+n-1]}{\lambda}+\frac{n(n-1)x^2}{\lambda^2}.
\end{equation*}
Hence, $\widehat{U}_1(b;x) =H_{1}(x)-\frac{2\eta}{\sqrt{\lambda}} H_{0}(x) $ and the claim follows trivially for $\widehat{U}_1(b;x)$. Suppose that $$\widehat{U}_k(b;x) = H_{k}(x) - \frac{2\eta k}{\sqrt{\lambda}} H_{k-1}(x)  + \mathcal{O}\big((1/\sqrt{\lambda})^2\big)$$ for each $k=1,\ldots, n$. By the Recurrence \eqref{recurrenceQ}, we have
\begin{equation*}
\begin{array}{ll}
\widehat{U}_{n+1}(b;x) = 2\left(x- \frac{\eta}{\sqrt{\lambda}}\right)\widehat{U}_{n}(b;x)-2n\widehat{U}_{n-1}(b;x)+\mathcal{O}\big((1/\sqrt{\lambda})^2\big)\\[1ex]
\hspace{10ex} = 2\left(x- \frac{\eta}{\sqrt{\lambda}}\right)[H_{n}(x) - \frac{2\eta n}{\sqrt{\lambda}} H_{n-1}(x) ]   - 2n[H_{n-1}(x) - \frac{2\eta (n-1)}{\sqrt{\lambda}} H_{n-2}(x) ] + \mathcal{O}\big((1/\sqrt{\lambda})^2\big)\\[1ex]
\hspace{10ex} =2xH_{n}(x)-2nH_{n-1}(x)  - \frac{2\eta}{\sqrt{\lambda}}[H_{n}(x) +2nx H_{n-1}(x) - 2n(n-1) H_{n-2}(x)]  +    \mathcal{O}\big((1/\sqrt{\lambda})^2\big).\
\end{array}
\end{equation*}
Thus,
\begin{equation*}
  \begin{array}{l}
\widehat{U}_{n+1}(b;x) = H_{n+1}(x) - \frac{2\eta(n+1)}{\sqrt{\lambda}} H_{n}(x) + \mathcal{O}\big((1/\sqrt{\lambda})^2\big),
  \end{array}
\end{equation*}
and this proves the expansion formula for $\widehat{U}_{n}(b;x)$, $n \geq 2$.

Now to obtain the second expansion formula, we first observe that
%
\begin{equation*}
\widetilde{U}_0(b;x)= 1 = L_{0}^{(2\lambda-1)}(x), \quad \widetilde{U}_{1}(b;x) =2\lambda - x = L_{1}^{(2\lambda-1)}(x),
\end{equation*}
and
\begin{align*}
 \widetilde{U}_{n+1}(b;x)=\Big[2 + \frac{2\lambda-2-x}{n+1}\Big]\widetilde{U}_{n}(b;x)-\Big[1 + \frac{2\lambda-2}{n+1}\Big]\Big[1+\Big(\frac{x}{2\eta}\Big)^2\Big]\widetilde{U}_{n-1}(b;x),
\end{align*}
%
for $n \geq 1$.  Thus, the formula is obtained similarly.
\end{proof}

We note that the asymptotic behavior with respect to the parameter $\eta$ can also be deduced from the connection between the Romanovski-Routh polynomials and the Bessel polynomials, as shown in Lesky et al. \cite{Book-HOPqA-2010}. The next theorem presents the order of convergence for the zeros of CRR polynomial as functions of $\lambda$ and $\eta$.

\begin{theorem}\label{monotonicity_02}
Let $h_{n,k}$ and $l^{(2\lambda-1)}_{n,k}$ be the $k$-th zeros (in increasing order) of $H_n (x)$ and $L_{n}^{(2\lambda-1)}(x)$, respectively. Then
\begin{equation*}
x_{n,k}^{(\eta,\lambda)} = \frac{h_{n,k}}{\sqrt{\lambda}} + \frac{\eta}{\lambda} + \mathcal{O}\big((1/\sqrt{\lambda})^{3}\big) \ \
\mbox{and} \ \ \
x_{n,k}^{(\eta,\lambda)} = \frac{2\eta}{l^{(2\lambda-1)}_{n,n+1-k}} +\mathcal{O}(1/\eta),
\end{equation*}
for each $k=1,\ldots,n$, as $\lambda\rightarrow\infty$ and $\eta\rightarrow\infty$, respectively. The symbol $\mathcal{O}$ is as in Theorem  \ref{Assintotico-eta-lambda}
\end{theorem}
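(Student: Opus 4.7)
The plan is to read off both asymptotics directly from the expansion formulas established in Theorem \ref{Assintotico-eta-lambda} by a standard perturbation-of-zeros argument.

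First I would handle the $\lambda\to\infty$ expansion. Since $\widehat{U}_n(b;x)=\frac{2^{n}(\lambda)_{n}}{\lambda^{n/2}}\PP_{n}(b;x/\sqrt{\lambda})$, the numbers $y_{n,k}:=\sqrt{\lambda}\,x_{n,k}^{(\eta,\lambda)}$ are precisely the zeros of $\widehat{U}_n(b;\cdot)$. From Theorem \ref{Assintotico-eta-lambda} we have $\widehat{U}_n(b;x)=H_n(x)-\frac{2\eta n}{\sqrt{\lambda}}H_{n-1}(x)+\mathcal{O}((1/\sqrt{\lambda})^2)$, so $\widehat{U}_n(b;\cdot)$ is an $\mathcal{O}(1/\sqrt{\lambda})$ perturbation of $H_n$; as $H_n$ has simple real zeros, $y_{n,k}\to h_{n,k}$. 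Writing $y_{n,k}=h_{n,k}+a_k/\sqrt{\lambda}+\mathcal{O}((1/\sqrt{\lambda})^2)$, Taylor expanding around $h_{n,k}$, and using $H_n(h_{n,k})=0$ together with the classical identity $H_n'(x)=2nH_{n-1}(x)$, the equation $\widehat{U}_n(b;y_{n,k})=0$ becomes
\begin{equation*}
\frac{a_k}{\sqrt{\lambda}}\,2n\,H_{n-1}(h_{n,k})-\frac{2\eta n}{\sqrt{\lambda}}H_{n-1}(h_{n,k})+\mathcal{O}((1/\sqrt{\lambda})^2)=0,
\end{equation*}
which forces $a_k=\eta$ (noting $H_{n-1}(h_{n,k})\neq 0$ since the zeros of $H_n$ and $H_{n-1}$ interlace). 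Dividing by $\sqrt{\lambda}$ then yields $x_{n,k}^{(\eta,\lambda)}=h_{n,k}/\sqrt{\lambda}+\eta/\lambda+\mathcal{O}((1/\sqrt{\lambda})^3)$, as desired.

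Next I would treat the $\eta\to\infty$ expansion. The substitution $x\mapsto 2\eta/x$ in the definition $\widetilde{U}_n(b;x)=\frac{(\lambda)_n}{\eta^n n!}x^n\PP_n(b;2\eta/x)$ shows that if $z_{n,j}$ denotes the $j$-th zero of $\widetilde{U}_n(b;\cdot)$ in increasing order, then $2\eta/z_{n,j}$ is a zero of $\PP_n(b;\cdot)$; because reciprocation reverses order (for $\eta$ large and positive the relevant zeros are positive), the matching is $x_{n,k}^{(\eta,\lambda)}=2\eta/z_{n,n+1-k}$. From Theorem \ref{Assintotico-eta-lambda}, $\widetilde{U}_n(b;x)=L_n^{(2\lambda-1)}(x)+\mathcal{O}(1/\eta^2)$, so the same Taylor-expansion argument (now using that $L_n^{(2\lambda-1)}$ has simple real zeros, so its derivative at each zero is nonzero) gives $z_{n,j}=l_{n,j}^{(2\lambda-1)}+\mathcal{O}(1/\eta^2)$. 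Substituting into $x_{n,k}^{(\eta,\lambda)}=2\eta/z_{n,n+1-k}$ and expanding the geometric series, the $\mathcal{O}(1/\eta^2)$ correction in the denominator gets multiplied by $2\eta$, producing the claimed $\mathcal{O}(1/\eta)$ error.

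I expect the argument to be essentially routine once the correspondence between zeros of $\PP_n$ and zeros of $\widehat{U}_n$, $\widetilde{U}_n$ is correctly set up. The only mild subtleties are (i) making sure the perturbation argument is justified, which is immediate because the unperturbed polynomials $H_n$ and $L_n^{(2\lambda-1)}$ have simple zeros (so one may use, e.g., the implicit function theorem or a Rouché-type count), and (ii) verifying that the constant in the first expansion collapses to exactly $\eta$ — this is the one place where the specific Hermite identity $H_n'=2nH_{n-1}$ is used, and it is what makes the leading correction so clean. For the Laguerre case one must also be careful about the indexing reversal $k\leftrightarrow n+1-k$ coming from $x\mapsto 2\eta/x$, but no Laguerre-specific identity is required beyond simplicity of zeros.
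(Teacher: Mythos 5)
Your proof is correct, but it takes a genuinely different route from the paper. You perform a direct regular perturbation of the rescaled polynomials: since $\widehat{U}_n(b;\cdot)$ and $\widetilde{U}_n(b;\cdot)$ are polynomials in $x$ whose coefficients depend polynomially on $1/\sqrt{\lambda}$ and on $1/\eta^2$ respectively, and since $H_n$ and $L_n^{(2\lambda-1)}$ have simple zeros, the implicit function theorem gives the zeros of $\widehat{U}_n$, $\widetilde{U}_n$ as convergent expansions around $h_{n,k}$ and $l_{n,k}^{(2\lambda-1)}$; the identity $H_n' = 2nH_{n-1}$ then yields the coefficient $\eta$ in one line, and the absence of odd powers of $1/\eta$ in $\widetilde{U}_n$ kills the constant term in the Laguerre case. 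The paper instead posits the ansatz $x_{n,k}^{(\eta,\lambda)} = h_{n,k}/\sqrt{\lambda} + B_{n,k}/\lambda + \mathcal{O}(\lambda^{-3/2})$ (resp. $2\eta D_{n,k} + C_{n,k} + \mathcal{O}(1/\eta)$), feeds it into the electrostatic identity $\sum_{i\neq k} (x_{n,k}^{(\eta,\lambda)}-x_{n,i}^{(\eta,\lambda)})^{-1} = [(n-1+\lambda)x_{n,k}^{(\eta,\lambda)}-\eta]/(1+(x_{n,k}^{(\eta,\lambda)})^2)$ coming from the differential equation, and determines $B_{n,k}=\eta$ and $C_{n,k}=0$ by showing the resulting linear systems $\mathbf{T}\mathbf{v}=-\mathbf{v}$ and $\tilde{\mathbf{T}}\mathbf{u}=-\tfrac12\mathbf{u}$ have only the trivial solution, using the positive semi-definiteness of the Stieltjes interaction matrices (via Calogero). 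Your approach is more elementary and self-contained: it avoids the Stieltjes systems and the spectral input on $\mathbf{T}$, and it actually \emph{establishes} the existence of the expansion that the paper's ansatz tacitly assumes. What the paper's route buys is the connection to the classical electrostatic interpretation of the zeros and a derivation that needs only the leading term of Theorem \ref{Assintotico-eta-lambda}, not its first-order correction. The only points you should make fully explicit are the uniformity of the $\mathcal{O}$-terms of Theorem \ref{Assintotico-eta-lambda} in $x$ on compacta (automatic here, since for fixed $n$ these are polynomials of bounded degree) and the order-preserving matching of the perturbed zeros to the unperturbed ones, both of which are routine.
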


\begin{proof}
The asymptotic behavior given by Theorem \ref{Assintotico-eta-lambda} suggests that the $\lambda$ dependence of $x_{n,k}^{(\eta,\lambda)}$ can be modeled by the ansatz
\begin{equation}\label{aux_01}
x_{n,k}^{(\eta,\lambda)}=\frac{h_{n,k}}{\sqrt{\lambda}}+\frac{B_{n,k}}{\lambda} + \mathcal{O}\big((1/\sqrt{\lambda})^{3}\big).
\end{equation}
Then our claim holds true if we show that $B_{n,k} = \eta$\, for $k =1,2, \ldots, n$.

By straightforward calculations we can show that
\[
      \frac{\PP_{n}^{\prime\prime}(b;x_{n,k}^{(\eta,\lambda)}) }{2\PP_{n}^{\prime}(b;x_{n,k}^{(\eta,\lambda)}) } = \sum_{i=1 \atop i\ne k}^{n}\frac{1}{x_{n,k}^{(\eta,\lambda)}-x_{n,i}^{(\eta,\lambda)}} .
\]
Thus, from Equation \eqref{Eq-Dif-Eq-2} we have
\begin{equation}\label{aux_02}
   \sum_{i=1 \atop i\ne k}^{n}\frac{1}{x_{n,k}^{(\eta,\lambda)}-x_{n,i}^{(\eta,\lambda)}}
              =\frac{(n-1+\lambda)x_{n,k}^{(\eta,\lambda)}-\eta}{1+(x_{n,k}^{(\eta,\lambda)})^{2}}, \quad \mbox{for} \quad k=1,2, \ldots n.
\end{equation}
Moreover, by using Equation \eqref{aux_01} in the left hand side of   \eqref{aux_02} and then considering the formal series expansion in powers of $1/\sqrt{\lambda}$, we have
\begin{align*}
\begin{split}
\sum_{i=1 \atop i\ne k}^{n}\frac{1}{x_{n,k}^{(\eta,\lambda)}-x_{n,i}^{(\eta,\lambda)}}
     &=  \sum_{i=1 \atop i\ne k}^{n}\frac{\sqrt{\lambda}}{h_{n,k}-h_{n,i}+(B_{n,k}-B_{n,i})/\sqrt{\lambda} + \mathcal{O}\big((1/\sqrt{\lambda})^2\big)}\\
&=\sum_{i=1 \atop i\ne k}^{n}\frac{\sqrt{\lambda}}{h_{n,k}-h_{n,i}}-
\sum_{i=1 \atop i\ne k}^{n}\frac{B_{n,k}-B_{n,i}}{(h_{n,k}-h_{n,i})^{2}} + \mathcal{O}\big(1/\sqrt{\lambda}\big),
\end{split}
\end{align*}
for $k =1,2, \ldots,n$. Similarly, substitution of Equation \eqref{aux_01} in the right hand side of \eqref{aux_02} gives
\begin{align*}
\begin{split}
\frac{(n-1+\lambda)x_{n,k}^{(\eta,\lambda)}-\eta}{1+(x_{n,k}^{(\eta,\lambda)})^{2}} &
    =  \frac{(n-1+\lambda) \left[h_{n,k}/\sqrt{\lambda} +B_{n,k}/\lambda +\mathcal{O}\big((1/\sqrt{\lambda})^3\big)\right]-\eta}
{1+h_{n,k}^{2}/\lambda+\mathcal{O}\big((1/\sqrt{\lambda})^3\big)}\\
&=\sqrt{\lambda}h_{n,k}+B_{n,k}-\eta+\mathcal{O}\big(1/\sqrt{\lambda}\big),
 \end{split}
\end{align*}
for $k =1,2, \ldots,n$. Thus, by comparing the above equalities, we find that the constraints
\begin{equation}\label{System-for-lambda}
  \sum_{i=1 \atop i\ne k}^{n}\frac{1}{h_{n,k}-h_{n,i}} = h_{n,k} \quad \mbox{and} \quad
   -\sum_{i=1 \atop i\ne k}^{n}\frac{B_{n,k}-B_{n,i}}{(h_{n,k}-h_{n,i})^{2}}=B_{n,k}-\eta,
\end{equation}
should be satisfied for each $k=1,\ldots,n$.
The first one involving the zeros of Hermite polynomials is known to be true. It is commonly known as Stieltjes system of equations for the zeros of the Hermite polynomials, studied by Stieltjes in 1885 (see, for example, Stieltjes \cite{Stieltjes1885-1} and Szeg\H{o} \cite{szego}).

The second constraint in \eqref{System-for-lambda} is a $n \times n$ linear system of equations that can be put in the form $\mathbf{T}\textbf{v}=-\textbf{v}$, where the matrix $\mathbf{T} = (t_{k,j})_{k,j=1}^{n}$ is symmetric with diagonal and off-diagonal elements given, respectively, by
\begin{equation*}
t_{k,k}=\sum_{i=1 \atop i\ne k}^{n}\frac{1}{(h_{n,k}-h_{n,i})^{2}} \quad \textup{and} \quad t_{k,j}= -\frac{1}{(h_{n,j}-h_{n,k})^{2}}.
\end{equation*}

Moreover,  $\mathbf{v} = [v_{1}, v_{2}, \ldots, v_{n}]^t$ with $v_{k}=B_{n,k}-\eta$.
The matrix $\mathbf{T}$ is positive semi-definite (for instance, in Calogero \cite{Calogero-LNC1977} its eigenvalues are explictly given) and thus its eigenvalues are nonnegative. This implies that the unique solution of the system $\mathbf{T}\mathbf{v}=-\mathbf{v}$ is the trivial solution. That is, $v_{k}=B_{n,k}-\eta = 0$ for $k=1,\ldots,n$. This proves the asymptotics with respect to the parameter $\lambda$.

To establish the asymptotic behavior with respect to the parameter $\eta$, suppose that
\begin{equation}\label{aux_03}
    x_{n,k}^{(\eta,\lambda)}=2\eta D_{n,k}+C_{n,k}+\mathcal{O}\left(1/\eta\right).
\end{equation}
From Theorem \ref{Assintotico-eta-lambda} we must have  $D_{n,k}=1/l_{n,n+1-k}^{(2\lambda-1)}$. Substitution of Equation \eqref{aux_03} in Equation  \eqref{aux_02}, followed  by the expansion in powers of $1/\eta$ leads to the constraints
\begin{align}\label{ttttt}
  \begin{split}
    \sum_{i=1 \atop i\ne k}^{n}\frac{1}{D_{n,k}-D_{n,i}}&=\frac{2(n+\lambda-1)D_{n,k}-1}{2 D_{n,k}^{2}}, \\
     \sum_{i=1 \atop i\ne k}^{n}\frac{C_{n,k}-C_{n,i}}{(D_{n,k}-D_{n,i})^2} &= \frac{C_{n,k}[(n+\lambda-1)D_{n,k}-1]}{D_{n,k}^{3}},
  \end{split}
\end{align}
for $k=1,2, \ldots,n$.  The first expression involving the reciprocal zeros of Laguerre polynomials is the nonlinear system satisfied by the zeros of the special Bessel polynomial, namely, $x^nL^{(2\lambda-1)}_{n}(1/x)$. 

In terms of $l_{n,n+1-k}^{(2\lambda-1)}$, Equations \eqref{ttttt} are equivalent to
\begin{align} \label{Eq-111}
  \begin{split}
     \sum_{i=1 \atop i\ne k}^{n}\frac{l_{n,n+1-i}^{(2\lambda-1)}}{l_{n,n+1-i}^{(2\lambda-1)}-l_{n,n+1-k}^{(2\lambda-1)}} &= (n+\lambda-1) - \frac{1}{2} l_{n,n+1-k}^{(2\lambda-1)}, \\
     \sum_{i=1 \atop i\ne k}^{n}\frac{[l_{n,n+1-i}^{(2\lambda-1)}]^2(C_{n,k}-C_{n,i})}{\left(l_{n,n+1-i}^{(2\lambda-1)}-l_{n,n+1-k}^{(2\lambda-1)}\right)^2} &= C_{n,k}\left[(n+\lambda-1)- l_{n,n+1-k}^{(2\lambda-1)}\right],
  \end{split}
\end{align}
for $k=1,2, \ldots,n$.

Now substituting for $(n+\lambda-1)-l_{n,n+1-k}^{(2\lambda-1)}/2$ in the right hand side of the second equation in \eqref{Eq-111} with the first equation, we find the system
\begin{equation*}
\begin{array}{rl}
\dsp \sum_{i=1 \atop i\ne k}^{n}
\left[
\frac{l_{n,n+1-i}^{(2\lambda-1)}l_{n,n+1-k}^{(2\lambda-1)}C_{n,k}}{\left(l_{n,n+1-i}^{(2\lambda-1)}-l_{n,n+1-k}^{(2\lambda-1)}\right)^2}
 -
\frac{l_{n,n+1-i}^{(2\lambda-1)}l_{n,n+1-i}^{(2\lambda-1)}C_{n,i}}{\left(l_{n,n+1-i}^{(2\lambda-1)}- l_{n,n+1-k}^{(2\lambda-1)}\right)^2}
\right]
=
-\frac{1}{2}l_{n,n+1-k}^{(2\lambda-1)}C_{n,k}, \quad \mbox{for} \quad k=1,2, \ldots,n.
\end{array}
\end{equation*}
We now consider this as a system of equations written as 
$$
\tilde{\mathbf{T}}\mathbf{u}=-\frac{1}{2}\mathbf{u}, 
\ \ \mathbf{u}= [ l_{n,n}^{(2\lambda-1)}C_{n,1},\, l_{n,n-1}^{(2\lambda-1)}C_{n,2}, \ldots,\, l_{n,1}^{(2\lambda-1)}C_{n,n}]^{T}.
$$
The matrix $\tilde{\mathbf{T}}$ is positive semi-definite, which follows from the Gershgorin Theorem. This system was also considered in Calogero \cite{Calogero-LNC1977} where its eigenvalues were calculated. Thus, with the negative factor $-1/2$, the system  $\tilde{\mathbf{T}}\mathbf{u}=-\frac{1}{2}\mathbf{u}$ has only the trivial solution. That is, $l_{n,n+1-k}^{(2\lambda-1)}C_{n,k} = 0$, $k =1, 2,\ldots, n$ and hence, $C_{n,k} = 0$, $k =1,2, \ldots, n$.
\end{proof}



\begin{thebibliography}{1}

\bibitem{MRRT-CRR-2019}
Mart\'inez-Finkelshtein A, Silva~Ribeiro LL, Sri~Ranga A, Tyaglov M.
  Complementary Romanovski-Routh polynomials: from orthogonal polynomials on
  the unit circle to Coulomb wave functions.  {\it Proc. Amer. Math. Soc.
  }2019;147:2625--2640.
\newblock DOI:10.1090/proc/14423.

\bibitem{RapWebCastKirc-CEJP2007}
Raposo AP, Weber HJ, Alvarez-Castillo DE, Kirchbach M. Romanovski polynomials
  in selected physics problems.  {\it Centr. Eur. J. Phys. }2007;5:253--284.
\newblock DOI:10.2478/s11534-007-0018-5.

\bibitem{Gautschi-SIAMRev1967}
Gautschi W. Computational aspects of three-term recurrence relations.  {\it
  SIAM Rev. }1967;9:24--82.
\newblock DOI:10.1137/1009002.

\bibitem{Swaminathan2023}
Shukla V, Swaminathan A. Spectral properties related to generalized complementary Romanovski–Routh polynomials. 
{\it Rev. Real Acad. Cienc. Exactas Fis. Nat. Ser. A-Mat. } 203;117, 78.
\newblock DOI:10.1007/s13398-023-01410-0.

\bibitem{BraccPereiRanga-NA2019}
Bracciali CF, Pereira JA, Sri~Ranga A. Quadrature rules from a $R_{II}$ type
  recurrence relation and associated quadrature rules on the unit circle. 
   {\it  Numer. Algor.} 2020;83:1029--1061.
\newblock DOI:10.1007/s11075-019-00714-w.

\bibitem{Kerstin2014}
Jordaan K, To\'okos F. Orthogonality and asymptotics of Pseudo-Jacobi
  polynomials for non-classical parameters.  {\it J. Approx. Theory.
  }2014;178:1--12.
\newblock DOI:10.1016/j.jat.2013.10.003.


\bibitem{Veronese}
Mart\'inez-Finkelshtein A, Sri~Ranga A, Veronese DO. Extreme zeros in a
  sequence of para-orthogonal polynomials and bounds for the support of the
  measure.  {\it Math. Comp. }2018;87:261--288.
\newblock DOI:10.1090/mcom/3210.


\bibitem{Ismail-Ranga}
Ismail MHE, Sri~Ranga A. $R_{II}$ type recurrence, generalized eigenvalue
  problem and orthogonal polynomials on the unit circle.  {\it Linear Algebra
  Appl. }2019;562:63--90.
\newblock DOI:10.1016/j.laa.2018.10.005.

\bibitem{Wall-book1948}
Wall HS. {\it Analytic Theory of Continued Fraction}.
\newblock New York: D. Van Nostrand Company; 1948.
\newblock ISBN 9780486823690.

\bibitem{Chihara-book1978}
Chihara TS. {\it An Introdution to Orthogonal Polynomials}.
\newblock New York: Gordon and Breach Science Publishers; 1978.
\newblock ISBN 9780677041506.

\bibitem{Masson}
Ismail MHE, Masson DR. Generalized orthogonality and continued fractions.  {\it
  J. Approx. Theory. }1995;83:1--40.
\newblock DOI:10.1006/jath.1995.1106.

\bibitem{Zhedanov-JAT1999}
Zhedanov A. Biorthogonal rational functions and generalized eigenvalue problem.
   {\it J. Approx. Theory. }1999;101:303--329.
\newblock DOI:10.1006/jath.1999.3339.





\bibitem{szego}
Szeg\H{o} G. {\it Orthogonal Polynomials (American Math. Soc: Colloquium
  publ)}.
\newblock Providence: American Mathematical Society; 1975.
\newblock ISBN 9780821810231.

\bibitem{Hille}
Hille E. {\it Lectures on Ordinary Differential Equation}.
\newblock New York: Addison-Wesley Publishing Company; 1968.
\newblock ISBN 9780201530834.



\bibitem{Kerstin}
Jordaan K, To\'okos F. Convexity of the zeros of some orthogonal polynomials
  and related functions.  {\it J. Comput. Appl. Math. }2009;233:762--767.
\newblock DOI:10.1016/j.cam.2009.02.045.

\bibitem{Geno}
Nikolov G, Uluchev R. Inequalities for Real-Root Polynomials. Proof of a
  Conjecture of Foster and Krasikov.  In: Approximation Theory: A volume
  dedicated to Borislav Bojanov (D.K. Dimitrov, G. Nikolov, and R. Uluchev,
  Eds.), Marin Drinov Academic Publ. House, Sofia:201--216; 2004.

\bibitem{DimitrovGeno}
Dimitrov DK, Nikolov GP. Sharp bounds for the extreme zeros of classical
  orthogonal polynomials.  {\it J. Approx. Theory. }2010;162:1793--1804.
\newblock DOI:10.1016/j.jat.2009.11.006.

\bibitem{Saff}
Moak DS, Saff EB, Varga RS. On the zeros of Jacobi polynomials
  $P_{n}^{(\alpha_{n},\beta_{n})}(x)$.  {\it Trans. Amer. Math. Soc.
  }1979;249:159--162.
\newblock DOI:10.1090/S0002-9947-1979-0526315-8.

\bibitem{Ismail-Book2005}
Ismail MHE. {\it Classical and Quantum Orthogonal Polynomials in one Variable
  (Encyclopedia of Mathematics and its Applications)}.
\newblock Cambridge: Cambridge University Press; 2005.
\newblock ISBN 9781107325982.

\bibitem{Simon-Book-p1}
Simon B. {\it Orthogonal Polynomials on the Unit Circle. Part 1. Classical
  Theory (American Math. Soc: Colloquium publ)}.
\newblock Providence: American Mathematical Society; 2005.
\newblock ISBN 0821837575.



\bibitem{Book-HOPqA-2010}
Lesky PA, Swarttouw RF, Koekoe R. {\it Hypergeometric Orthogonal Polynomials
  and Their q-Analogues (Springer Monographs in Mathematics)}.
\newblock Berlin: Springer-Verlag Berlin Heidelberg; 2010.
\newblock ISBN 9783642050138.

\bibitem{Stieltjes1885-1}
Stieltjes TJ. Sur quelques th\'{e}or\`{e}mes d'alg\`{e}bre.  {\it Comptes
  Rendus de I'Acad\'{e}mie des Sciences. }1885;100:439--440.

\bibitem{Calogero-LNC1977}
Calogero P. On the zeros of the classical polynomials.  {\it Lett. Nuovo
  Cimento. }1977;19:505--508.
\newblock DOI:10.1007/BF02748213.


\end{thebibliography}





\end{document}